\documentclass[reqno,12pt,letterpaper]{amsart}
\usepackage{amsmath,amssymb,amsthm,graphicx,mathrsfs,url}
\usepackage[usenames,dvipsnames]{color}
\usepackage[colorlinks=true,linkcolor=Red,citecolor=Green]{hyperref}
\usepackage{amsxtra}
\usepackage{epstopdf}

\usepackage[dvipsnames]{xcolor}
\usepackage{bbm}
\usepackage{tikz}

\usepackage{graphicx}

\setlength{\marginparwidth}{0.6in}

\def\?[#1]{\textbf{[#1]}\marginpar{\Large{\textbf{??}}}}

\def\smallsection#1{\smallskip\noindent\textbf{#1}.}
\setcounter{secnumdepth}{4}
\setlength{\textheight}{8.50in} \setlength{\oddsidemargin}{0.00in}
\setlength{\evensidemargin}{0.00in} \setlength{\textwidth}{6.08in}
\setlength{\topmargin}{0.00in} \setlength{\headheight}{0.18in}
\setlength{\marginparwidth}{1.0in}
\setlength{\abovedisplayskip}{0.2in}
\setlength{\belowdisplayskip}{0.2in}
\setlength{\parskip}{0.05in}

\numberwithin{equation}{section}


    




\def\smallsection#1{\smallskip\noindent\textbf{#1}.}
\let\epsilon=\varepsilon 
\newcommand{\dd}{\, {\rm d}}

\setlength{\textheight}{8.50in} \setlength{\oddsidemargin}{0.00in}
\setlength{\evensidemargin}{0.00in} \setlength{\textwidth}{6.08in}
\setlength{\topmargin}{0.00in} \setlength{\headheight}{0.18in}
\setlength{\marginparwidth}{1.0in}
\setlength{\abovedisplayskip}{0.2in}
\setlength{\belowdisplayskip}{0.2in}
\setlength{\parskip}{0.05in}

\newcommand{\R}{\mathbb{R}}

\DeclareGraphicsRule{*}{mps}{*}{}

\usepackage{amsxtra}
\usepackage{epstopdf}

\newtheorem{theorem}{Theorem}
\newtheorem{proposition}{Proposition}[section]	
\newtheorem{definition}[proposition]{Definition}
\newtheorem{assumption}{Assumption}

\newtheorem{lemma}[proposition]{Lemma}

\newtheorem{remark}{Remark}


\title{$L^2$-stability near equilibrium for the $4$ waves kinetic equation}
\author{Angeliki Menegaki} 
\address{Institut des hautes études Scientifiques, 35 Rte de Chartres, 91440, Bures-sur-Yvette, France}
\email{menegaki@ihes.fr}



\begin{document}
\maketitle

\begin{abstract}
We consider the four waves spatial homogeneous kinetic equation arising in wave turbulence theory. We study the long-time behaviour and existence of solutions around the Rayleigh-Jeans equilibrium solutions. For cut-off'd frequencies, we show that for dispersion relations weakly perturbed around the quadratic case, the linearized operator around the Rayleigh-Jeans equilibria is coercive. We then pass to the fully nonlinear operator, showing an $L^2$ - stability for initial data close to Rayleigh-Jeans. 
\end{abstract}
\tableofcontents

\section{Introduction}
We aim to study the spectral properties of the linearized \emph{kinetic wave equation} or \emph{phonon Boltzmann equation}. According to the theory of wave turbulence, the so-called kinetic wave equation is obtained as a kinetic limit of time-reversible wave dispersive equations where many waves interact weakly non-linearly and it predicts the evolution of the average distribution of energy
in the frequency space. 
Put differently, in analogy with the kinetic theory of particles, one can directly compare the kinetic wave equation with the usual Boltzmann equation for dilute gases, obtained in the Boltzmann-Grad limit, describing the evolution of the distribution of particles in the phase space. 

This wave kinetic equation has dynamical properties similar to the Boltzmann equation as an $H$-theorem holds and the mass, momentum, and energy are conserved quantities. The system is driven towards the equilibrium solution which is the Rayleigh-Jeans distribution. In addition to this equilibrium stationary state, it was shown by Zakharov \cite{Zakharov66} that also non-equilibrium steady states exist, the so-called Kolmogorov–Zakharov solutions. These are characterised by the presence of non-zero fluxes 
 where energy is transferred through direct cascades from small to large frequencies and mass through inverse cascades from large to small frequencies.
For more details we refer to the books \cite{Zakharov98,Nazarenko,NewellRumpf}. 

The research area of wave turbulence theory has now attracted a lot of attention due to the wide range of practical applications. To mention a few the wave kinetic equation appears in the studies of plasma waves, surface capillary and gravity waves \cite{Zakharov:1999,Hasselmann:1962}, nonlinear optics \cite{Dyachenko:1992} and elastic plates \cite{During:2017}.

\subsection{Kinetic wave equation and properties}
The space homogeneous kinetic wave equation reads as follows
\begin{equation} \label{eq:NL WKE}
\begin{split}
    \partial_t n(t,k) &=\mathcal{L} n(t,k) \\ &:=  - \int_{\mathbb{R}^9} \frac{n n_1n_2n_3}{|k|^{\beta/2}|k_1|^{\beta/2} |k_2|^{\beta/2}|k_3|^{\beta/2}} \left(\frac{1}{n_1} + \frac{1}{n_2} - \frac{1}{n_3} - \frac{1}{n}\right) \delta_{\Omega} \delta_{\Sigma} \dd k_1 \dd k_2 \dd k_3
    \end{split}
\end{equation}
where $$\Sigma = \Sigma (k, k_1, k_2,k_3) = k_1+k_2-k_3-k\ \text{and}\ \Omega= \Omega (k, k_1, k_2,k_3) = \omega_1+\omega_2-\omega_3-\omega, $$ $n_i= n(k_i), \omega_i=\omega(k_i)$ and $k \mapsto \omega(k)$ is the dispersion relation. Finally $\beta$ in the cross-section is a parameter originated from the nonlinearity of the dynamics in the microscopic level and in particular the value $\beta=0$ corresponds to the cubic power law in the nonlinear Schr\"{o}dinger equation. 

From the collision operator one can easily see that the mass, momentum and energy are conserved at least formally and that the equilibrium solutions take the form for $(a,b,c) \in \R \times \R^3\times \R_+$:  $$ f_{\infty}(k)= \frac{1}{a+b\cdot k + c \omega},\quad  a+b\cdot k + c \omega >0,\ c >0.$$
as the collision integral vanishes.

\subsection{Linearization around Rayleigh-Jeans equilibria}
We linearize around the equilibrium solution $ f_{\infty}(k) = \frac{1}{\omega+1 \cdot k+1}$:  
\[n(k)= f_{\infty}(k) (1+\varepsilon g(k)).\] The new unknown $g$ satisfies the following linearized equation 
\begin{equation}
\begin{split}
    \partial_t g(t,k) &= \frac{f_{\infty}^{-1}}{\varepsilon} \partial_t n(t,k) \\ 
    &= f_{\infty}(k)^{-1} \int_{\mathbb{R}^9} \frac{ f_{\infty}(k)f_{\infty}(k_1)f_{\infty}(k_2)f_{\infty}(k_3)}{|k|^{\beta/2}|k_1|^{\beta/2} |k_2|^{\beta/2}|k_3|^{\beta/2}} \times \\ 
&\times \big[ f_{\infty}(k_1)^{-1} g_1 + f_{\infty}(k_2)^{-1} g_2 - f_{\infty}(k_3)^{-1} g_3 - f_{\infty}(k)^{-1}  g \big] 
 \delta_{\Omega} \delta_{\Sigma} \dd k_1 \dd k_2 \dd k_3+ \mathcal{O}(\varepsilon).
\end{split}
\end{equation}
This yields the linearized operator 
\begin{equation} \label{eq:linearised_op}
\begin{split} 
  L g (k) :=  &\int_{\mathbb{R}^9} \frac{ f_{\infty}(k_1)f_{\infty}(k_2)f_{\infty}(k_3)}{|k|^{\beta/2}|k_1|^{\beta/2} |k_2|^{\beta/2}|k_3|^{\beta/2}} \times \\ 
&\times \big[ f_{\infty}(k_1)^{-1} g_1 + f_{\infty}(k_2)^{-1} g_2 - f_{\infty}(k_3)^{-1} g_3 - f_{\infty}(k)^{-1}  g \big]
 \delta_{\Omega} \delta_{\Sigma} \dd k_1 \dd k_2 \dd k_3. 
 \end{split} 
 \end{equation} 
This operator is self-adjoint on $L^2$ and the Dirichlet form of $-L$ satisfies:
\begin{equation}
\begin{split}
\mathcal{D}(g) := -\langle L g,g &\rangle_{L^2}  =  \frac{1}{4} \int  \frac{f_{\infty}(k_1)f_{\infty}(k_2)f_{\infty}(k_3)}{|k|^{\beta/2}|k_1|^{\beta/2} |k_2|^{\beta/2}|k_3|^{\beta/2}} \times \\ & \times \big[ f_{\infty}(k_1)^{-1}g_1 + f_{\infty}(k_2)^{-1}  g_2 - f_{\infty}(k_3)^{-1} g_3 - f_{\infty}(k)^{-1}  g \big]^2 \delta_{\Omega} \delta_{\Sigma} \dd k_1 \dd k_2 \dd k_3 dk \\  &\geq 0
\end{split}
\end{equation}
which implies that the spectrum of $L$ lies on $\mathbb{R}_-$. Its null space is 
\[\operatorname{Ker}(L) =\operatorname{span} \{ 1, k , \omega\}. \]

\subsection{Main ideas and assumptions}

The goal is (i) to study  the linearized around the Rayleigh-Jeans equilibrium operator $L$ and to show the existence of an $L^2$ spectral gap. For this purpose we split the operator into a sum of two integral operators for which we show that one is coercive while the other is a compact operator. Then we can ensure the existence of a spectral gap due to the coercive part by Weyl's theorem. 
(ii) We control the $L^2$ norm of the nonlinearities to show through a fixed-point argument that a mild solution of $\mathcal{L}$ exists in a ball around the Rayleigh-Jeans equilibrium whose radius depends on the spectral gap of $L$. 
Finally using all the above we show the existence of a basin of attraction of Rayleigh-Jeans, characterized by an $L^2$ norm. 
 This argument is reminiscent of an argument of Grad used to study the linearized Boltzmann operator for hard potentials and it is the first time that it is used to obtain stability for the wave kinetic equation. 

For these we need the following assumptions: 

\noindent
\textbf{Cut-off for the frequencies}: Regarding the domain of the frequencies, we restrict to the domain  $[0, k_c]$ for a finite constant $k_c$ for each component, i.e. $|k|,|k_i| \leq k_c$. This set-up is not physically motivated as normally we do not expect the frequencies to stay bounded. This cut-off assumption is  however needed in order to control the singularities appearing in the collision operator. In particular to ensure the coercivity and the compactness properties of the splitting of the linearised operator. Let us stress that we impose this assumption to all the wavenumbers, so that the symmetries of the collisional operator are conserved. To picture this, see fig. \ref{fig: cutoff} on the plane when the dispersion relation is quadratic and the resonance condition is interpreted as a collision among particles leading to another group of particles.

We will assume for the dispersion relation  the following
 
\begin{assumption}[Assumptions on the dispersion relation]
  \label{ass: dispersion}
  We assume the following for the dispersion relation $\omega(k)$
  \begin{itemize}
  \item[(i)] $\omega(k)=\Omega(\vert k \vert)$, $\Omega(|k|)= |k|^2+ s(|k|)  \in C^2(\mathbb{R}_+)$. 
  \item[(ii)]  There exist $C_1, C_2, C_3$ positive finite constants with 
    \begin{equation} 
        \| s \|_{L^{\infty}(\R_+)} \leq C_1\  \text{and }\  0< C_2  \leq \frac{\Omega'(x)}{x} \leq C_3,\ x \in \R_+.
    \end{equation} 
    \item[(iii)] The Hessian of the perturbation $S(x) = s(|x|)$ is a diagonal positive definite so that $\Lambda_1 |z|^2 \leq z^T\operatorname{Hess} S(x) z \leq \Lambda_2 |z|^2$  for all $z,x\in \mathbb{R}^3$, for finite positive constants $\Lambda_1,\Lambda_2$.
  \end{itemize}
\end{assumption}
    That means that we consider a positive dispersion relation under the constraint that it has a quadratic growth while allowing for some oscillations as well. 



\subsection{State of the art}  Typically there are two types of such kinetic equations in wave turbulence, namely the $3$-waves interactions and the $4$-waves interactions. Peierls \cite{Peierls1929} in 1929 was the first to write down the $3$-wave equation termed as the phonon Boltzmann equation, where he derived it from harmonic lattices perturbed with weak anharmonicities. Later the subject was revised by Zakharov and his collaborators, see \cite{ZakLvovFalkov}, especially after the discovery of the non-equilibrium Kolmogorov–Zakharov spectra.
 
Regarding its rigorous derivation, see first a proposed program in \cite{Spohn:2006} where the starting point is, as also suggested by Peierls,  a harmonic lattice with a weakly anharmonic (non-quadratic) on-site potential, otherwise known as the FPUT chain. More recently there many important advances starting from cubic nonlinear Schr\"{o}dinger equation with random initial data, see \cite{LukSpohn},  then \cite{BGHS21,CollGermain,DengHani1} up to the kinetic time-scale (up to errors) and a derivation reaching the kinetic time-scale \cite{DengHani2}. 

Concerning now the well-posedness of \eqref{eq:NL WKE}, in \cite{EV13} under velocity-isotropy assumption and for quadratic dispersion relation, without imposing forcing the authors studied the long-time behaviour in a weighted $L^{\infty}$ space. In particular in \cite{EV13}  the authors obtained global, measure valued, weak solutions and showed that as time goes to infinity condensation can happen, where all the mass is concentrated at the origin. Furthermore they studied the transfer of energy in higher frequencies and prove a stability result around the Kolmogorov-Zakharov spectrum. 
We also mention the article \cite{SaraMerino16} where existence and uniqueness of radial weak solutions to a simplified version of the $4$-waves interactions equation in a broad range of dispersion relations was studied.

More recent works on the local well-posedness for the homogeneous $4$-waves kinetic equation for a general class of dispersion relations and without the isotropy assumption are in \cite{GIT20}, where the authors work in weighted $L^\infty$ spaces. There, restricting to the quadratic dispersion relation, they have the estimates in weighted $L^2$ as well. Moreover a study of space inhomogeneous $4$-waves interaction is done recently in \cite{Ioakeim22} where global well-posedness and preservation of sign in $L^\infty$ is proven for a perturbation of the vacuum exploiting the dispersive properties of the  transport operator.

Concerning works on the $3$-waves interactions in weak wave turbulence, it has been studied in many works for phonon interactions in anharmonic crystals which as Peierls suggested leads to the kinetic model of a nonlinear Boltzmann equation of interacting phonons and it has the same mathematical formulation as the quantum Boltzmann equation of bosons at very low temperature, see for example \cite{EscobTran2015, CraciunTran:2016,EscobPezzValle2011}. It is also studied in the context of capillary waves, \cite{NguyenTran18} and for oceanic internal gravity waves \cite{GambaTranSmith2017}. We also  mention the recent study in \cite{SofferTran2020} for acoustic waves where the authors study the energy cascades phenomenon and where a comparison with coagulation-fragmentation type of equations is considered. 
Finally we mention the article \cite{Escobedo22} where the author studied an approximation of the linearized $3$-waves kinetic equation around Rayleigh-Jeans equilibria. Existence, uniqueness and properties of the fundamental solution are obtained and the initial value problem is solved for integrable and locally bounded initial data. 

As for the more complicated and physicaly interesting case of stability around the Kolmogorov-Zakharov non-equilibrium solutions, see the preprint \cite{CDG22} where an $L^{\infty}$ stability is provided when the indirect mass cascade is taken into account.

\subsection{Statements of the results} Under the previous assumptions we first prove a coercivity-result for the linearized operator.

\begin{theorem} \label{theo:coerc linearised}
Let the hypothesis \ref{ass: dispersion} (i),(ii) hold, $k ,k_i \in [0,k_c]^3$ for $k_c < \infty$ and $\beta =0$. Then the linearized operator $L_{k_c}$ is coercive, i.e.
$$ \forall g \in  L^2( [0,k_c]^3 ) \cap  (\operatorname{Ker}(L_{k_c}))^\perp, \quad \mathcal{D}^{k_c}(g)
 \geq \lambda  \|g \|_{L^2([0,k_c]^3)}^2, $$
for some positive constant $\lambda = \lambda(k_c)$, where $ \mathcal{D}^{k_c}(g)=-\langle L_{k_c} g, g \rangle$ and $L_{k_c}$, defined in \eqref{eq: K and A operators}, is the operator $L$ in \eqref{eq:linearised_op} for the cut-off'd frequencies. 
\end{theorem}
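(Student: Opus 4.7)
The plan is the classical Grad-type scheme anticipated in Section 1.3: decompose $L_{k_c}$ as the sum of a strictly negative multiplication operator and a compact integral operator, then invoke Weyl's theorem to locate the essential spectrum away from zero and use the known finite-dimensional kernel to extract the gap. Explicitly, pulling the $-f_\infty(k)^{-1}g(k)$ term out of the bracket in \eqref{eq:linearised_op} and using $\beta=0$, I would write $L_{k_c}g = Kg - \nu g$ with
\begin{equation*}
\nu(k) = f_\infty(k)^{-1}\!\int f_\infty(k_1) f_\infty(k_2) f_\infty(k_3)\, \delta_\Omega \delta_\Sigma \, dk_1 dk_2 dk_3,
\end{equation*}
and $K$ the integral operator assembled from the three remaining gain-type terms in $g_1, g_2, g_3$.

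The first step is to establish $0 < \nu_0 \leq \nu(k) \leq \nu_1 < \infty$ uniformly on $[0,k_c]^3$. The upper bound is immediate because $f_\infty$ is bounded and the integration domain is compact. For the lower bound, I would use $\delta_\Sigma$ to eliminate $k_3 = k_1+k_2-k$ and then apply the coarea formula to resolve $\delta_\Omega$ as a surface integral on the resonance manifold $\{(k_1,k_2) : \omega(k_1)+\omega(k_2)=\omega(k_1+k_2-k)+\omega(k)\}$. The bound $C_2 \leq \Omega'(x)/x \leq C_3$ of Assumption~\ref{ass: dispersion}(ii) prevents the gradient of this resonance function from vanishing, so the manifold is smooth and its surface measure depends continuously on $k$. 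Combined with the pointwise positivity of $f_\infty$ and non-emptiness of the manifold for each $k \in [0,k_c]^3$ (already visible for the purely quadratic case as the condition $k_1 \perp k_2$ at $k=0$, and stable under small perturbations), continuity over a compact set then delivers $\nu_0 > 0$.

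The second step is compactness of $K$ on $L^2([0,k_c]^3)$. After resolving the two delta functions in each of the three gain-type terms, $K$ becomes a finite sum of integral operators whose kernels are bounded measurable functions on the compact product $[0,k_c]^3 \times [0,k_c]^3$; each is therefore Hilbert--Schmidt, hence compact. Putting these together, $-L_{k_c}=M_\nu - K$ is a compact perturbation of the multiplication operator $M_\nu$, so by Weyl's theorem $\sigma_{\mathrm{ess}}(-L_{k_c}) = \sigma_{\mathrm{ess}}(M_\nu) \subset [\nu_0,\nu_1]$. Since the Dirichlet form is non-negative, $\sigma(-L_{k_c}) \subset [0,\infty)$, and any spectrum in $[0,\nu_0)$ consists of isolated eigenvalues of finite multiplicity, accumulating only at $\nu_0$. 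The kernel $\operatorname{Ker}(L_{k_c}) = \operatorname{span}\{1,k,\omega\}$ being finite-dimensional, the restriction of $-L_{k_c}$ to its orthogonal complement has strictly positive spectrum whose infimum $\lambda(k_c)>0$ is the desired coercivity constant.

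The principal obstacle is the uniform positive lower bound on $\nu$: points near the boundary of $[0,k_c]^3$ are potentially delicate because the resonance manifold could degenerate there, and controlling this quantitatively is exactly where Assumption~\ref{ass: dispersion}(ii) is used in a non-trivial way, rather than just smoothness of $\omega$. Once this is in place, the compactness of $K$ and the Weyl--Grad conclusion are routine consequences of the cut-off and of $\beta=0$, which together remove the infrared and ultraviolet singularities that would obstruct a direct attack on the original operator.
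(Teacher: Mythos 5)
Your decomposition is exactly the paper's: $L_{k_c}=\mathcal{K}^{k_c}-\mathcal{A}^{k_c}$ with $\mathcal{A}^{k_c}$ the multiplication by the loss frequency $\nu$, compactness of the gain part via Hilbert--Schmidt kernels, and Weyl's theorem to reduce the spectral gap to a uniform lower bound on $\nu$ together with the finite-dimensional kernel. The overall architecture is therefore the same, but you have deferred precisely the two points where the paper does actual work, and one of your stated justifications is not correct as written.

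First, the compactness step. After resolving $\delta_\Sigma$ and $\delta_\Omega$ via the parametrisation of the resonant manifold, the kernels of the gain operators are \emph{not} bounded measurable functions on $[0,k_c]^3\times[0,k_c]^3$: the surface measure divided by $|\nabla\mathcal{C}|$ produces a Jacobian proportional to $|k+k_3|$ (per unit $\alpha$), and after the change of variables back to $z$ the kernel of the term acting on $g(z_\alpha)$ and $g(k+k_3-z_\alpha)$ carries the factor $\bigl(|z|+|k+k_3-z|\bigr)/|k+k_3|$, which blows up as $k+k_3\to 0$. The paper shows the kernels are merely square-integrable, splitting the integral according to $|k+k_3|\gtrless\delta$ and using that, because all components of the wavenumbers are nonnegative on $[0,k_c]^3$, $|k+k_3|\le\delta$ forces $|z|$ and $|k+k_3-z|$ to be $O(\delta)$ as well, so the numerator cancels the singularity. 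This is also exactly where $\beta=0$ is needed; for $\beta\neq 0$ the extra factors $[|k||k_1||k_2||k_3|]^{-\beta/2}$ destroy square-integrability near the origin. Your ``bounded kernel, hence Hilbert--Schmidt'' shortcut skips the only delicate estimate in the compactness lemma.

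Second, the lower bound on $\nu$. You correctly identify this as the principal obstacle but then propose a soft continuity-plus-compactness argument that hinges on non-degeneracy of the resonance manifold at every $k$, including boundary points --- a claim you do not establish. The paper avoids this entirely by an explicit computation: with $z=\alpha(k+k_3)+r_\alpha e_\theta$, the measure $\delta_\Omega\,dz$ becomes $|k+k_3|\bigl(\tfrac{\Omega'(|k+k_3-z_\alpha|)}{|k+k_3-z_\alpha|}+\tfrac{\Omega'(|z_\alpha|)}{|z_\alpha|}\bigr)^{-1}d\alpha\,d\theta$, the bracket is bounded above by $2C_3$ by Assumption 1(ii), and the $\alpha$-integral of $|k+k_3|$ over $\bigl[-\tfrac{k_c}{|k+k_3|},\tfrac{k_c}{|k+k_3|}\bigr]$ equals $2k_c$ \emph{identically in $k$ and $k_3$}. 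Combined with $f_\infty\ge 1/M$ on the compact domain this yields the explicit constant $\pi k_c/M^{3}$ (for $\beta=0$) with no limiting argument and no boundary analysis. If you want to complete your version of the proof, you should either carry out this parametrised computation or genuinely prove the uniform non-degeneracy you are invoking; as it stands, the step you flag as the obstacle is left open.
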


 \begin{definition}[Mild solutions]  We say that $f=f(t,k)$ on $(0,\infty) \times \R^3$  is a mild solution to \eqref{eq:NL WKE} with initial data $f_0 \in L^2(\R^3)$ when $f \in C^1((0,\infty), L^2(\R^3,\R))$, for $t>0$ and 
 $ f(t,k) = f_0(k) + \int_{0}^{t}  \mathcal{L} f (s,k) \dd s \in L^2.$
 \end{definition} 

We then pass to the fully nonlinear operator. We deal with the question of existence of mild solutions close to equilibrium and their $L^2$ - stability. 
 
\begin{theorem}\label{theo: stability NL}
Let $C>0$, $\beta=0$, $\omega(k)=|k|^2+ s(|k|)$ satisfying assumption \ref{ass: dispersion} (i),(ii), (iii) and $k,k_i \in [0,k_c]^3$. Assuming that the linearized operator has an $L^2$ spectral gap $\lambda >0$, which is the case when  $g_t \in (\operatorname{Ker}(L_{k_c}))^{\perp}$  for all $t\geq 0$, and the initial data $g_0$ so that $\|g_0\|_{L^2([0,k_c]^3)} \leq \frac{\sqrt{\lambda}}{2\sqrt{2C}}$,  then there is a mild solution $g_t$ satisfying $\|g_t\|_{L^2([0,k_c]^3)} \leq \frac{\sqrt{\lambda}}{2\sqrt{2C}}$. 
Also, when $\Pi$ is the projection onto the equilibria
\begin{align*} 
\| \Pi^\perp g_t \|_{L^2([0,k_c]^3)}& \leq e^{ -\frac{\lambda}{2}t} \| \Pi^\perp g_0 \|_{L^2([0,k_c]^3)} + (1-e^{-\frac{\lambda}{2}t}) \| \Pi g_0 \|_{L^2([0,k_c]^3)}. 
\end{align*}
In particular as soon as $\Pi g_0=0$, $ \Pi^{\perp} g_t$ decays exponentially fast. 
\end{theorem}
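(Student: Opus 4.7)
The plan is to treat \eqref{eq:NL WKE} as a perturbation of its linearization around $f_{\infty}$. Writing $n=f_{\infty}(1+g)$ and expanding $\mathcal{L}[f_{\infty}(1+g)]$ in powers of $g$, the evolution takes the form $\partial_t g = L g + N(g)$, with $L$ as in \eqref{eq:linearised_op} and $N(g)$ collecting the quadratic and cubic terms in $g$. On the compact box $[0,k_c]^3$ the equilibrium $f_{\infty}$ and the weights $|k|^{-\beta/2}$ are bounded above and below, so a direct inspection of the collision integrand is expected to yield the key nonlinear estimate
\begin{equation*}
 \|N(g)\|_{L^2([0,k_c]^3)} \leq C\,\|g\|_{L^2([0,k_c]^3)}^2
\end{equation*}
for $g$ in a bounded ball, with the constant $C=C(k_c)$ that appears in the theorem, together with the polarised Lipschitz bound $\|N(g_1)-N(g_2)\|_{L^2}\leq C(\|g_1\|+\|g_2\|)\,\|g_1-g_2\|_{L^2}$.

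With this bound and Theorem~\ref{theo:coerc linearised} in hand, the existence of a mild solution in the ball of radius $r=\sqrt{\lambda}/(2\sqrt{2C})$ follows from a standard Duhamel fixed-point argument applied to
\begin{equation*}
 g_t = e^{tL} g_0 + \int_0^t e^{(t-s)L}\, N(g_s)\, ds.
\end{equation*}
Since $L$ is non-positive self-adjoint, $\|e^{tL}\|_{L^2\to L^2}\leq 1$, and the coercivity of $L_{k_c}$ on $(\operatorname{Ker}L_{k_c})^\perp$ sharpens this to $\|e^{tL}h\|_{L^2}\leq e^{-\lambda t}\|h\|_{L^2}$ on that subspace. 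Combined with the quadratic bound on $N$, these facts ensure the Duhamel map both sends the ball of radius $r$ in $C([0,\infty);L^2)$ to itself and is a strict contraction there whenever $\|g_0\|_{L^2}\leq r$.

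For the long-time bound I would decompose $g_t=\Pi g_t + \Pi^\perp g_t$. Since $L$ is self-adjoint with kernel $\operatorname{span}\{1,k,\omega\}$, its range lies in $(\operatorname{Ker}L_{k_c})^\perp$, which splits the equation as $\partial_t \Pi^\perp g_t = L\,\Pi^\perp g_t + \Pi^\perp N(g_t)$ and $\partial_t \Pi g_t = \Pi N(g_t)$. Pairing the first with $\Pi^\perp g_t$, invoking the spectral gap, and using $\|N(g_t)\|_{L^2}\leq C\|g_t\|_{L^2}^2 \leq Cr\,\|g_t\|_{L^2}$, one arrives at a differential inequality of the form
\begin{equation*}
 \tfrac{d}{dt}\|\Pi^\perp g_t\|_{L^2} \leq -\tfrac{\lambda}{2}\|\Pi^\perp g_t\|_{L^2} + c\,\|\Pi g_t\|_{L^2},
\end{equation*}
once the choice of $r$ is used to absorb half of the coercive term into the nonlinear term. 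Coupling this with the parallel estimate on $\|\Pi g_t\|_{L^2}$ produced by $\partial_t\Pi g_t = \Pi N(g_t)$ and closing via Gronwall gives the announced mixed exponential bound.

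The main technical obstacle is the nonlinear estimate $\|N(g)\|_{L^2}\leq C\|g\|^2$. While the cut-off trivialises the weights, the resonance distribution $\delta_\Omega\delta_\Sigma$ requires care: to bound the relevant bilinear (and trilinear) forms in $L^2$ one needs the resonance manifold $\{\Sigma=0,\,\Omega=0\}$ to be transverse with a uniformly bounded Jacobian. This is exactly the role of the positive-definite Hessian in assumption~\ref{ass: dispersion}(iii), which was not needed for the coercivity step because there one worked with a non-negative quadratic form. Turning this geometric transversality into the quadratic bound above, uniformly in the relevant variables, is the crux of the argument; everything else is a quantitative repackaging of the Grad-type strategy recalled in the introduction.
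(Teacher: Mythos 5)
Your overall architecture (Duhamel fixed point in a small ball using the semigroup decay from Theorem~\ref{theo:coerc linearised}, then an energy/Gronwall estimate for the decay) matches the paper's, but there are two genuine gaps. The first is the nonlinear estimate $\|N(g)\|_{L^2}\lesssim \|g\|_{L^2}^2+\|g\|_{L^2}^3$, which you first present as following from ``direct inspection'' (the boundedness of the weights on $[0,k_c]^3$ does nothing to tame the measure $\delta_\Omega\delta_\Sigma$) and then, correctly, identify as the crux --- but you never supply the argument. In the paper this is Lemma~\ref{lem:NL estimate}, and it is not a soft transversality statement: the cubic term $Q_2^-$ is handled by a $TT^*$ argument for the extension-type operator $Tg(\mu,\nu)=\int g(x)e^{i\mu\cdot x+i\nu\omega(x)}\,dx$ using decay of the Schr\"odinger-type kernel associated with $\omega=|k|^2+s$, while $Q_1^-$ requires mollifying $\delta_\Omega$, Taylor-expanding the resonance function (this is where Assumption~\ref{ass: dispersion}(iii) enters, via the positive-definite Hessian of $S$ allowing a change of variables $k_2\mapsto(2+\nabla^2S(\xi))k_2$), and invoking a kernel bound from the prior literature. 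Saying that ``turning this geometric transversality into the quadratic bound is the crux'' acknowledges the gap without closing it; as written, the central estimate of the proof is assumed.

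The second gap is in the long-time bound. You propose to couple the inequality for $\|\Pi^\perp g_t\|$ with a ``parallel estimate'' coming from $\partial_t\Pi g_t=\Pi N(g_t)$. But the stated conclusion has the \emph{constant} source $(1-e^{-\lambda t/2})\|\Pi g_0\|$, and in particular exponential decay when $\Pi g_0=0$; neither follows from a coupled system in which $\|\Pi g_t\|$ is merely controlled by $\|\Pi g_0\|+\int_0^t C\|g_s\|^2\,ds$, which grows linearly in $t$. The missing ingredient is that $\Pi g_t=\Pi g_0$ is \emph{constant in time}: the collision operator conserves mass, momentum and energy, so $\partial_t g$ is $L^2$-orthogonal to $\operatorname{Ker}(L_{k_c})$ and $\Pi N(g)=0$ identically. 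This is exactly how the paper closes the Gronwall argument, writing $g_t=g_t^1+g_t^2$ with $\partial_t g_t^1=0$ and obtaining $\partial_t\|g_t^2\|\le-\tfrac{\lambda}{2}\|g_t^2\|+\tfrac{\lambda}{2}\|g_0^1\|$ once $\|g_t\|+\|g_t\|^2<\lambda/(4C)$, an inequality your choice of radius guarantees. Without the observation $\Pi N(g)=0$ the decay statement does not close.
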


\begin{remark}[On the assumptions] Even though everything is written in $d=3$, 
the results hold for all dimensions $d \geq 2$. The further assumption $\beta=0$, is needed to ensure the compactness part in the linearised operator and also for the $L^2$-estimates to control the cubic nonlinearity. These estimates are required for the global stability of the fully nonlinear operator. Finally let us comment on the two reasons we restrict that all the frequencies $|k|,|k_i| \leq k_c$, cf fig \ref{fig: cutoff}: 
\begin{enumerate} 
\item[(i)] First, on the level of the linearised operator $L$ we want $L=$ coercive part $+$ compact part. The cut-off assumption is used in order to prove the compactness property of the second part. 
We are led therefore by technicalities to assume the cut-off to ensure compactness, but we do not expect it to be a necessary assumption for our result. Removal of the cut-off in this functional setting will be an interesting extension of this work. 
\item[(ii)] Second, for the fixed-point argument on the nonlinear level, the cut-off is used to ensure  an upper bound on the equilibrium $f_{\infty}$. 
\end{enumerate} 
\end{remark}

\begin{remark} 
Notice that even though here we show that when we are close to the Rayleigh-Jeans equilibrium, the dynamics remains close to it in $L^2$, this is not in general the case even when there is no external forcing in the system. An example of a different long-time behaviour in the isotropic case under finite mass is in \cite{EV13}, where weak stationary solutions are characterised as Dirac measures. 
\end{remark}

\begin{remark}
Here we stress that this stability property holds only for solutions $g_t \in \operatorname{Ker}(L_{k_c})^{\perp}$ for all $t \geq 0$, meaning that we look rather at the evolution of $\Pi^{\perp} g_t $ where $\Pi$ is the projection onto the equilibria (see also the discussion in the beginning of subsection \ref{subsect: existence}). Equivalently, initially we impose a mean-zero property, both for the mass and the energy: at $t=0$, $ \int g_0(k) f_{\infty}(k) \dd k=  \int g_0(k)\omega(k) f_{\infty}(k) \dd k=0$,  which due to the conservation of mass and energy also holds for all later times, at least formally. This way we do not allow mass or energy fluctuations which makes our set-up here different than the one studied in \cite{EscobValle}. 
\end{remark}

\begin{figure} \label{fig: cutoff}
  \centering
  \begin{tikzpicture}
    \draw[-latex] (-3.6,0) -- (3.6,0) node[right] {$$};
    \draw[-latex] (0,-3.6) -- (0,3.6) node[above] {$$};

    \draw (0,0) circle [radius=2.9];
    \node at (3.1,0) [fill=white, inner sep=1pt] {$k_c$}; 
    \draw[rotate=45] (1.1,0.0) rectangle ++(1, 1.5);
\node at (0.7,0.6) {$k_3$};
\node at (0.6,2.7) {$k$};
\node at (1.7,1.5) {$k_1$};
\node at (-0.5,1.9) {$k_2$};


  \draw[rotate=40, dashed] (1.0,-2.5) rectangle ++(1, 1.5);
  \end{tikzpicture}
  \caption{In $2$-dim, as long as $|k|,|k_i|\leq k_c$ for all $i=1,2,3$ we allow collisions. The dashed rectangular corresponding to a collision when the resulting wavenumbers are outside of the domain is not allowed in our set-up.}
\end{figure}
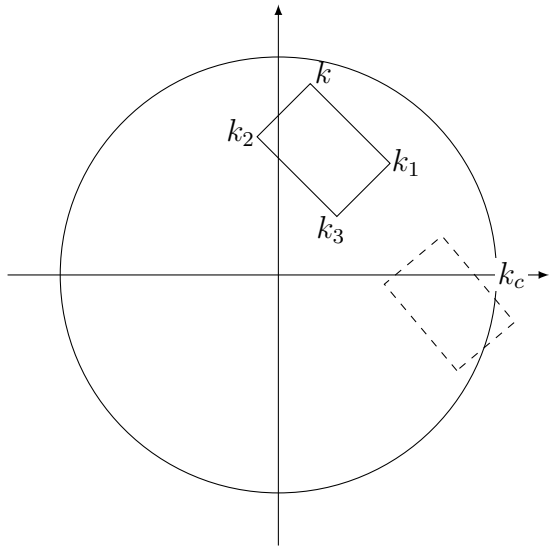

\smallsection{Notation} 
For a parameter $s\geq 0$, the space $L^2_s:=  \{ g : \| \langle k \rangle^{s}  g \|_{L^2} < \infty \} $ where $\langle  k \rangle = (1+|k|^2)^{1/2}$. 
We also occasionally write $A \lesssim_{\lambda} B$ in order to say that $A \leq C B$ for some constant $C = C(\lambda)$ that depends on $\lambda$.  We also employ the Kronecker delta where $\delta_{k \in I} = 1$ if $k\in I$ and $0$ otherwise. The inner product of two vectors $x,y \in \R^d$ is denoted either by $\langle x,y \rangle$ or $x \cdot y$.

\section{Method of proofs} 
First we present the parametrisation of the resonant manifold on which the integral collision operator acts. This parametrisation for more general dispersion relations  can be found in \cite{GIT20} (also in \cite{NT19} for the quantum Boltzmann equation). 

\subsection{Parametrization of the resonant manifold} 
When the dispersion relation is the quadratic $\omega(k)=|k|^2$ and the solution $n$ is radial, the equation takes the form of the one-dimensional Boltzmann equation 
\begin{align*} 
 \partial_t &f(t,|k|^2) = \int_{\mathbb{R}_+^2}\frac{|k_2| |k_1| \text{min}(|k|,|k_1|,|k_2|,|k_3|)}{|k| (|k||k_1||k_2||k_3|)^{\beta/2}}\times \\ 
 & \times  \Big( f(|k_2|^2)f(|k_1|^2) \big[f(|k|^2)+f(|k_3|^2) \big]-f(|k|^2)f(|k_3|^2) \big[f(|k_2|^2)+f(|k_1|^2)\big]  \Big) \dd |k_1| \dd |k_2|. 
 \end{align*}
where $|k_3|^2= |k_1|^2+ |k_2|^2- |k|^2$. Since the conservation laws known for Boltzmann hold for this isotropic model as well, the Carleman’s collision parametrization \cite{LaureSR09, Villani_review, cercignani1988} can be applied in order to write the collision operator as integrals over spheres. This is helpful as then its decomposed parts have exact and convenient forms making it easier to prove the desired coercivity property for the one part and the compactness property for the rest.  


Considering more general dispersion laws for the nonlinear wave kinetic equation \eqref{eq:NL WKE}, we set $z=k_2$ and let $ S_{k,k_3}$ be the resonant manifold, i.e. the set of zeros of $\mathcal{C}(z) = \omega(k_3-z+k) - \omega(k) - \omega(k_3)+ \omega(z)$.
Then 
 $$\delta_{\omega_1+\omega_2-\omega_3-\omega}  \dd z \dd k_3 = \frac{\dd \mu(z)}{ |\nabla \mathcal{C}(z) | } \dd k_3 $$
where $\mu$ is the surface measure on $S_{k,k_3}$. The derivative of $\mathcal{C}$ is given by 
$$ \nabla_z \mathcal{C} = \frac{z-k-k_3}{|z-k-k_3|} \Omega'(|k+k_3-z|)+ \frac{z}{|z|}\Omega'(|z|)$$
and the directional derivative in any direction vertical to $k+k_3$ is positive, i.e.  $\mathcal{C}(z)$ is strictly increasing in any direction orthogonal to $k+k_3$. Indeed if $q$ is a vector orthogonal to $k+k_3$,
$$ \langle q , \nabla_z \mathcal{C} \rangle = |q|^2 \left( \frac{\Omega'(|z|)}{|z|}+ \frac{\Omega'(|k+k_3-z|)}{|k+k_3-z| }\right) > 0. $$

For a parameter $\alpha \in \mathbb{R}$, we write the vector $z$ as follows: $z= \alpha (k+k_3) + q$ where $\langle q,(k+k_3) \rangle =0$. Then since $\mathcal{C}(z)= \mathcal{C}(\alpha (k+k_3) + q)$ is a radial function in $q$
we obtain that the intersection of $S_{k,k_3}$ with the plane 
$$P_\alpha:=\{\alpha (k+k_3) + q: \ \langle q,(k+k_3) \rangle = 0\}, $$
if it is not empty, it is a circle with radius  $r_\alpha< \infty$ and centre at $\alpha (k+k_3)$.

We then consider a vector $(k+k_3)^{\perp}$ in the plane $P_0:=\{q : \langle q,k+k_3 \rangle = 0\}$ and $e_\theta  \in \R^3$ a unit vector in $P_0$ so that the angle between $(k+k_3)^{\perp}$ and $e_\theta $ is $\theta$. We can then parametrise the resonant manifold as
$$
S_{k,k_3} = \Big\{z(\alpha, \theta)=\alpha (k+k_3) +r_\alpha e_\theta : \theta \in [0,2 \pi], \alpha  \in  \mathcal{A}_{k,k_3} \Big\}
$$ 
where $\mathcal{A}_{k,k_3}$ is the set of $\alpha \in \R$ so that there is a solution to $\mathcal{C}(z)=0.$

We note that we can interpret $\mathcal{C}$ as a function of $\mathcal{C}(r,\alpha)$ where $\partial_r \mathcal{C}(r,\alpha) >0$ and thus through the implicit function theorem we can write $r=r_\alpha$ for $\alpha \in  \mathcal{A}_{k,k_3} $ via  a smooth function that is vanishing on the boundary. We extend $\alpha \in \R$ by setting $r_\alpha=0$ outside of the set $\mathcal{A}_{k,k_3}$. 

For the computation of the surface area we write $$\dd\mu(z) = |  \partial_\alpha z \times \partial_\theta z|\ \dd\alpha\ \dd\theta =   | ( (k+k_3) + \partial_\alpha r_\alpha e_\theta ) \times r_\alpha \partial_\theta e_\theta |\ \dd\alpha\ \dd \theta. $$
Since $\partial_\theta e_\theta$ is orthogonal to $e_\theta$ and to $k+k_3$, we write 
\begin{equation} \label{eq: parametr1}
\begin{split}
 \dd\mu(z) &= |r_\alpha|  \big\vert  ( (k+k_3) + \partial_\alpha r_\alpha e_\theta )  \big\vert \dd\alpha\ \dd\theta 
  = \left(  r_\alpha^2 | k+k_3|^2 + \frac{|\partial_\alpha (r_\alpha^2) |^2}{4} \right)^{1/2} \dd\alpha\ \dd\theta. 
\end{split}
\end{equation}
Now  in order to make it explicit we want to compute $\partial_\alpha (r_\alpha^2)$. This is written in terms of $\partial_\alpha  |z_\alpha |^2$, for $z_\alpha = \alpha (k+k_3) + q$, as $ |z_\alpha |^2 =\alpha^2 | k+k_3 |^2 + r_\alpha^2$ which implies $\partial_\alpha (r_\alpha^2) =  \partial_\alpha  |z_\alpha |^2 - 2\alpha | k+k_3 |^2.$

We differentiate the relation $\mathcal{C}(z_{\alpha})=0$ and then we take the inner product with $\partial_\alpha z_{\alpha}$ to have 
\begin{align*}
&0=\left\langle  \partial_\alpha z_\alpha , \nabla_z \mathcal{C}(z_\alpha) \right\rangle \\
&=  \left\langle
\partial_\alpha z_\alpha , z_\alpha \left(\dfrac{\Omega'(|k+k_3-z_\alpha|)}{|k+k_3-z_\alpha|} + \dfrac{\Omega'(|z_\alpha|)}{|z_\alpha| } \right)
\right\rangle  - \left\langle
\partial_\alpha z_\alpha , \dfrac{k+k_3}{|k+k_3-z_\alpha|} \Omega'(|k+k_3-z_\alpha|)
\right\rangle
\end{align*}
so that 
\begin{equation}
\begin{split}
\partial_\alpha |z_\alpha|^2 &= 
2 \frac{|k+k_3|^2}{\dfrac{\Omega'(|k+k_3-z_\alpha|)}{|k+k_3-z_\alpha|} + \dfrac{\Omega'(|z_\alpha|)}{|z_\alpha| } }\dfrac{\Omega'(|k+k_3-z_\alpha|)}{|k+k_3-z_\alpha|}.
\end{split}
\end{equation}

Then 
\begin{equation} \label{eq: parametr2}
\begin{split}
\partial_\alpha (r_\alpha^2)& =  2 \frac{|k+k_3|^2}{\dfrac{\Omega'(|k+k_3-z_\alpha|)}{|k+k_3-z_\alpha|} + \dfrac{\Omega'(|z_\alpha|)}{|z_\alpha| } }\dfrac{\Omega'(|k+k_3-z_\alpha|)}{|k+k_3-z_\alpha|} - 2\alpha |k+k_3|^2 \\
&= 2 |k+k_3|^2 \left(   \frac{ ( 1-\alpha) \dfrac{\Omega'(|k+k_3-z_\alpha|)}{|k+k_3-z_\alpha|} - \alpha \dfrac{\Omega'(|z_\alpha|)}{|z_\alpha| }  }{  \dfrac{\Omega'(|k+k_3-z_\alpha|)}{|k+k_3-z_\alpha|} + \dfrac{\Omega'(|z_\alpha|)}{|z_\alpha| }  } \right). 
\end{split}
\end{equation}
Now concerning the $|\nabla_z \mathcal{C}(z)|^2$:
\begin{equation} \label{eq: parametr3}
\begin{split}
&|\nabla_z \mathcal{C}(z)|^2 = \left\vert  \frac{z_\alpha-k-k_3}{|z_\alpha-k-k_3|} \Omega'(|k+k_3-z_\alpha|)+ \frac{z_\alpha}{|z_\alpha|}\Omega'(|z_\alpha|)   \right\vert^2 \\ 
&=  |k+k_3|^2 \left\vert  
(\alpha-1)\dfrac{\Omega'( |k+k_3-z_\alpha| )}{ |k+k_3-z_\alpha| } + \alpha \dfrac{\Omega'(|z_\alpha|)}{|z_\alpha| } \right\vert^2 
 + r_\alpha^2 \left\vert  \dfrac{\Omega'(|k+k_3-z_\alpha|)}{|k+k_3-z_\alpha|} + \dfrac{\Omega'(|z_\alpha|)}{|z_\alpha| }  \right\vert^2.
\end{split}
\end{equation}

Due to cancellations, using \eqref{eq: parametr1}, \eqref{eq: parametr2} and \eqref{eq: parametr3}, we eventually write 
\begin{align}
\label{eq:parametrisation}
\delta_{\omega_1+\omega_2-\omega_3-\omega}  \dd z \dd k_3 
&= \frac{\dd  \mu(z)}{ |\nabla \mathcal{C}(z) | } \dd k_3  
=   \frac{|k+k_3|}{\dfrac{\Omega'(|k+k_3-z_\alpha|)}{|k+k_3-z_\alpha|} + \dfrac{\Omega'(|z_\alpha|)}{|z_\alpha| }} \dd \alpha \dd \theta \dd k_3.
\end{align}
We notice that since $|z_\alpha|^2 = |\alpha|^2|k+k_3|^2+ r_\alpha^2 \leq k_c^2$,  $ |\alpha|^2 = \frac{ |z_\alpha|^2- r_\alpha^2 }{| k+k_3 |^2}$.

Then the nonlinear operator in \eqref{eq:NL WKE} restricted to the domain $[0,k_c]^3$ for each wavenumber, is 
\begin{align*}
& \int_{[0,k_c]^9} \frac{n n_1n_2n_3}{[|k||k_1| |k_2||k_3|]^{\beta/2}} \left(\frac{1}{n_1} + \frac{1}{n_2} - \frac{1}{n_3} - \frac{1}{n}\right) \delta_{\Omega} \delta_{\Sigma} \dd k_1 \dd k_2 \dd k_3 
 \\ & = \int_{[0,k_c]^6} \frac{n(k) n(k+k_3-k_2)n(k_2)n(k_3) }{[|k||k+k_3-k_2||k_2||k_3|]^{\beta/2} } \left(\frac{1}{n(k+k_3-k_2)} + \frac{1}{n(k_2)} - \frac{1}{n(k_3) } - \frac{1}{n(k)}\right) \delta_{\Omega}  \dd k_2 \dd k_3 
 \\ &= \int_{[0,k_c]^3} \Bigg\{ \int_{S_{k,k_3}} \frac{n(k) n(k+k_3-z)n(z)n(k_3)}{ [|k||k+k_3-z| |z||k_3|]^{\beta/2} } \times 
 \\ & \times  \left(\frac{1}{n(k+k_3-z)} + \frac{1}{n(z)} - \frac{1}{n(k_3) } - \frac{1}{n(k)}\right)   
 \frac{\dd \mu(z)}{ |\nabla \mathcal{C}(z) | }  \Bigg\}  \dd k_3
 \\&= 
 \int_{[0,k_c]^3} \Bigg\{ 
 \int_{-\frac{ \sqrt{k_c^2- r_\alpha^2} }{| k+k_3 | }}^{\frac{ \sqrt{k_c^2- r_\alpha^2} }{| k+k_3 | }} \int_0^{2\pi} 
 \frac{n(k) n(k+k_3-z_\alpha)n(z_\alpha)n(k_3)}{[|k||k+k_3-z_\alpha| |z_\alpha||k_3|]^{\beta/2} } \times \\ & \times  \left(\frac{1}{n(k+k_3-z_\alpha)} + \frac{1}{n(z_\alpha)} - \frac{1}{n(k_3) } - \frac{1}{n(k)}\right)   
  \frac{|k+k_3|}{\dfrac{\Omega'(|k+k_3-z_\alpha|)}{|k+k_3-z_\alpha|} + \dfrac{\Omega'(|z_\alpha|)}{|z_\alpha| }} 
 \dd \theta  \dd \alpha  \Bigg\}  
  \dd k_3.
 \end{align*}



\subsection{Splitting of the operator}
The proof of the main results follows techniques already applied for the theory of the Boltzmann equation \cite{GradUnesco, LaureSR09}. The idea is to decompose the operator between a part satisfying the coercivity estimate and a compact part.  We split the operator as follows:
\begin{equation} \label{eq: K and A operators}
\begin{split}
&L_{k_c} g (k) := \int_{[0,k_c]^9 } f_{\infty}(k)^{-1} \int_{\mathbb{R}^9} \frac{ f_{\infty}(k)f_{\infty}(k_1)f_{\infty}(k_2)f_{\infty}(k_3)}{|k|^{\beta/2}|k_1|^{\beta/2} |k_2|^{\beta/2}|k_3|^{\beta/2}} \times \\ 
&\times \big[ f_{\infty}(k_1)^{-1} g_1 + f_{\infty}(k_2)^{-1} g_2 - f_{\infty}(k_3)^{-1} g_3 - f_{\infty}(k)^{-1}  g \big] 
 \delta_{\Omega} \delta_{\Sigma} \dd k_1 \dd k_2 \dd k_3 \\ 
&= \int \frac{f_{\infty}(k_1)f_{\infty}(k_2)f_{\infty}(k_3) }{|k|^{\beta/2}|k_1|^{\beta/2} |k_2|^{\beta/2}|k_3|^{\beta/2}} \big[ f_{\infty}(k_1)^{-1} g_1  +f_{\infty}(k_2)^{-1} g_2  -  f_{\infty}(k_3)^{-1} g_3 \big]
\delta_{\Omega} \delta_{\Sigma} \dd k_1 \dd k_2 \dd k_3   \\
&- \int \frac{ f_{\infty}(k_1)f_{\infty}(k_2)f_{\infty}(k_3) f_{\infty}(k)^{-1} }{ |k|^{\beta/2}|k_1|^{\beta/2} |k_2|^{\beta/2}|k_3|^{\beta/2}} g(k) 
 \delta_{\Omega} \delta_{\Sigma} \dd k_1 \dd  k_2 \dd k_3 \\ 
 &:= \mathcal{K}^{k_c} g(k) - \mathcal{A}^{k_c} g(k).
\end{split}
\end{equation}

Concerning the range of values of $\alpha$ we see that since by the assumption $|z_\alpha|\leq k_c$, $\alpha \in \left[ - \frac{\sqrt{k_c^2-r_\alpha^2}}{|k+k_3|}, \frac{\sqrt{k_c^2-r_\alpha^2}}{|k+k_3|} \right]$. 
Then the proof will follow from the next two lemmas.  
\begin{lemma} \label{lem: coercive A}
Let $\beta \in \R$, the multiplication operator $\mathcal{A}^{k_c}$ defined in \eqref{eq: K and A operators} is \emph{coercive} when $k , k_i \in [0,k_c]^3$ and under hypothesis \ref{ass: dispersion} (i),(ii). In particular 
$$ \int_{ [0,k_c]^3}(\mathcal{A}^{k_c}g(k) )g(k)  \dd k \geq  \frac{ \pi k_c^{-2\beta+1} }{M^3} \int_{[0,k_c]^3}  g(k)^2   \dd k$$
where $M= M(k_c)$ is so that $ f_{\infty}(k)^{-1} \leq M$ for all $k \in [0,k_c]^3$. 
\end{lemma}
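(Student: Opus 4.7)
The plan is to exploit that $\mathcal{A}^{k_c}$ is simply a multiplication operator by the ``collision frequency''
\[
\nu(k) := \int_{[0,k_c]^9}\frac{f_\infty(k_1)f_\infty(k_2)f_\infty(k_3)}{f_\infty(k)\,|k|^{\beta/2}|k_1|^{\beta/2}|k_2|^{\beta/2}|k_3|^{\beta/2}}\,\delta_\Omega\delta_\Sigma\,dk_1\,dk_2\,dk_3,
\]
so that $\int_{[0,k_c]^3}(\mathcal{A}^{k_c}g)(k)\,g(k)\,dk=\int \nu(k)\,g(k)^2\,dk$ and the lemma reduces to the pointwise estimate $\nu(k)\geq \pi k_c^{-2\beta+1}/M^3$ for every $k\in[0,k_c]^3$. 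I would begin by replacing each factor in the integrand by a favourable constant: by definition of $M$ we have $f_\infty(k_i)\geq 1/M$; by the normalization of the equilibrium together with the nonnegativity of the coordinates and of $\omega$ on the cut-off domain, $f_\infty(k)^{-1}\geq 1$; and the crude bound $|k_i|,|k|\leq k_c$ gives $|k|^{-\beta/2}\prod_i|k_i|^{-\beta/2}\geq k_c^{-2\beta}$. These factor out of the integral and reduce the task to the purely geometric lower bound
\[
\int_{[0,k_c]^9}\delta_\Omega\,\delta_\Sigma\,dk_1\,dk_2\,dk_3\;\geq\;\pi\, k_c.
\]

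To control this resonance-manifold measure I would invoke the parametrisation \eqref{eq:parametrisation} derived in the previous subsection, rewriting the left-hand side as
\[
\int_{[0,k_c]^3}\int_{\mathcal A_{k,k_3}}\int_0^{2\pi}\frac{|k+k_3|}{\Omega'(|k+k_3-z_\alpha|)/|k+k_3-z_\alpha|+\Omega'(|z_\alpha|)/|z_\alpha|}\,d\theta\,d\alpha\,dk_3.
\]
The $\theta$-integration contributes a factor $2\pi$, and Assumption~\ref{ass: dispersion}(ii) bounds the denominator uniformly by $2C_3$, so every piece of the Jacobian is bounded below by $|k+k_3|/(2C_3)$. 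It then suffices to identify a subdomain of $(k_3,\alpha)$ on which $|k+k_3|$ is bounded below by a positive multiple of $k_c$ and on which the admissible $\alpha$-interval has length at least of order $k_c/|k+k_3|$; combining with the $\theta$-factor and the constants inherited from Assumption~\ref{ass: dispersion}(ii) would deliver the stated $\pi k_c$ lower bound (with the constants absorbed in the definition of $M$, or, if one wishes to keep them explicit, simply regrouped into the final constant).

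The principal obstacle is the implicit nature of $\mathcal A_{k,k_3}$: the parameter $r_\alpha$ is defined only through the resonance equation $\mathcal C(z_\alpha)=0$ together with the cut-off $|z_\alpha|,\,|k+k_3-z_\alpha|\leq k_c$, and it vanishes on the boundary of $\mathcal A_{k,k_3}$. To handle this cleanly I would argue around the \emph{trivial} resonant configurations $z_\alpha=k$ and $z_\alpha=k_3$, i.e.\ $\alpha=0$ and $\alpha=1$, where $r_\alpha=0$ and the resonance is automatically solved. Using the smoothness of $r_\alpha$ furnished by the implicit function theorem, together with the two-sided bounds $C_2\leq\Omega'(x)/x\leq C_3$, one obtains a uniform neighbourhood of these trivial points contained in $\mathcal A_{k,k_3}$ on which the cut-off is strictly satisfied and the integrand is under control. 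A direct integration over this subregion, combined with a restriction of $k_3$ to a ball where $|k+k_3|$ is bounded away from zero, then yields the geometric lower bound and completes the proof.
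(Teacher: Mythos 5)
Your proposal follows essentially the same route as the paper: reduce to a pointwise lower bound on the multiplicative weight, bound the $f_{\infty}$-factors by $1/M$, use $f_{\infty}^{-1}\geq 1$ and $|k|,|k_i|\leq k_c$ for the cross-section, and lower-bound the remaining resonant-manifold integral through the parametrisation \eqref{eq:parametrisation} with the Jacobian denominator controlled by Assumption \ref{ass: dispersion}(ii). The only substantive difference is that you take extra care over the admissible $\alpha$-range (arguing via the implicit function theorem around the trivial resonances $\alpha=0,1$), whereas the paper simply integrates $\alpha$ over the full interval $\left[-\tfrac{k_c}{|k+k_3|},\tfrac{k_c}{|k+k_3|}\right]$; this addresses a point the paper glosses over but does not change the argument.
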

\begin{proof} 
 Due to the parametrisation above, we have 
\begin{align*}
&\int_{ [0,k_c]^3}(\mathcal{A}^{k_c}g(k) ) g(k)  \dd k =  \\
& \int_{ [0,k_c]^6}\int_{ [-\frac{k_c}{|k+k_3|},\frac{k_c}{|k+k_3|}]}\int_{0}^{2\pi} 
\frac{f_{\infty}(k+k_3-z_\alpha) f_{\infty}(z_\alpha)f_{\infty}(k_3)}{|k|^{\beta/2}|k_1|^{\beta/2} |k_2|^{\beta/2}|k_3|^{\beta/2}}
 g(k)^2  f_{\infty}(k)^{-1} \times \\ 
& \qquad \times \left( \frac{|k+k_3|}{\dfrac{\Omega'(|k+k_3-z_{\alpha}|)}{|k+k_3-z_{\alpha}|} + \dfrac{\Omega'(|z_{\alpha}|)}{|z_{\alpha}| }}\right)\dd \theta  \dd \alpha  \dd k_3  \dd k 
\gtrsim \\ &C_1 
 \int_{ [0,k_c]^6} \int_{ [-\frac{k_c}{|k+k_3|},\frac{k_c}{|k+k_3|}]}\int_{0}^{2\pi}
   \frac{f_{\infty}(k+k_3-z_\alpha) f_{\infty}(z_\alpha)f_{\infty}(k_3)}{|k|^{\beta/2}|k_1|^{\beta/2} |k_2|^{\beta/2}|k_3|^{\beta/2}}
 \frac{g(k)^2}{f_{\infty}(k)} |k+k_3| \dd \theta  \dd \alpha  \dd k_3  \dd k 
\end{align*}
where we used that $\frac{|k+k_3-z|}{\Omega'(|k+k_3-z|)} \gtrsim \mathcal{O}(1)$. Also if $M = M(k_c)$ is the upper bound on $f_{\infty}(k)^{-1}$, $1+\omega(k)+1 \cdot k \leq M$, we write 
\begin{align*}
&\int_{ [0,k_c]^3}(\mathcal{A}^{k_c}g(k) ) g(k)  \dd k \gtrsim 
\\ & 
\frac{2 \pi k_c^{-2\beta} C_1}{4 M^3 } \int_{[0,k_c]^3}  g(k)^2   \int_{ [0,k_c]^3} \int_{-\frac{k_c}{|k+k_3|}}^{\frac{k_c}{|k+k_3|}} |k+k_3| \dd \alpha \dd k_3  \dd k \gtrsim \\
 &   \frac{2 \pi C_1}{4M^3}2 k_c^{-2\beta+1} \int_{ [0,k_c]^3}  g(k)^2  \dd k =\frac{ \pi k_c^{-2\beta+1} }{M^3} \int_{ [0,k_c]^3}  g(k)^2  \dd k. 
\end{align*}
\end{proof} 

\begin{lemma} \label{lem: compactness K}
The operator $\mathcal{K}^{k_c}$ defined in \eqref{eq: K and A operators} is \emph{compact} when $k, k_i \in [0,k_c]^3$, $\beta=0$ and when hypothesis \ref{ass: dispersion} (i),(ii) holds. 
\end{lemma}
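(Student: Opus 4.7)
The plan is to show that $\mathcal{K}^{k_c}$ is a Hilbert--Schmidt operator on $L^2([0,k_c]^3)$, which implies compactness. To this end I first split $\mathcal{K}^{k_c}$ into three pieces $\mathcal{K}^{(i)}$, $i=1,2,3$, according to which of $g_1,g_2,g_3$ appears in the integrand. Since $\delta_\Sigma, \delta_\Omega$ and the product $f_\infty(k_1)f_\infty(k_2)f_\infty(k_3)$ are all invariant under the exchange $k_1\leftrightarrow k_2$, one has $\mathcal{K}^{(1)}=\mathcal{K}^{(2)}$, so it suffices to produce Hilbert--Schmidt kernels $K^{(1)}(k,k_1)$ and $K^{(3)}(k,k_3)$ with $\mathcal{K}^{(i)} g(k) = \int_{[0,k_c]^3} K^{(i)}(k,k_i)\, g(k_i)\,\dd k_i$ and $K^{(i)}\in L^2([0,k_c]^3\times[0,k_c]^3)$.

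For $\mathcal{K}^{(3)}$ the variable $k_3$ already plays the role it has in the parametrization of Section~2. Using $\delta_\Sigma$ to set $k_1=k+k_3-k_2$ and then applying \eqref{eq:parametrisation} with $z=k_2$, I obtain
\begin{equation*}
K^{(3)}(k,k_3) = \int_{\alpha}\int_0^{2\pi}\, f_\infty(k+k_3-z_\alpha)\, f_\infty(z_\alpha)\, \frac{|k+k_3|}{\frac{\Omega'(|k+k_3-z_\alpha|)}{|k+k_3-z_\alpha|}+\frac{\Omega'(|z_\alpha|)}{|z_\alpha|}}\,\dd\theta\,\dd\alpha.
\end{equation*}
On the cut-off domain $f_\infty$ is bounded by some $M=M(k_c)$, Assumption \ref{ass: dispersion}(ii) gives a lower bound $2C_2$ for the denominator, and the $\alpha$-range has length $\lesssim 1/|k+k_3|$, so the factor $|k+k_3|$ cancels. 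Hence $K^{(3)}$ is uniformly bounded on $[0,k_c]^3\times[0,k_c]^3$, which gives $K^{(3)}\in L^2$.

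For $\mathcal{K}^{(1)}$ I exchange the roles of the variables: using $\delta_\Sigma$ to set $k_2=k+k_3-k_1$ yields
\begin{equation*}
K^{(1)}(k,k_1) = \int_{[0,k_c]^3} f_\infty(k+k_3-k_1)\, f_\infty(k_3)\, \delta\bigl(\widetilde{\mathcal{C}}(k_3)\bigr)\,\dd k_3,
\end{equation*}
where $\widetilde{\mathcal{C}}(k_3):=\omega(k_1)+\omega(k+k_3-k_1)-\omega(k)-\omega(k_3)$. The co-area formula then reduces this to a surface integral over $\{\widetilde{\mathcal{C}}=0\}\cap[0,k_c]^3$ weighted by $|\nabla_{k_3}\widetilde{\mathcal{C}}|^{-1}$. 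Since $\nabla_{k_3}\widetilde{\mathcal{C}} = \nabla\omega(k+k_3-k_1)-\nabla\omega(k_3)$, writing this as $\int_0^1 \mathrm{Hess}\,\omega(k_3+t(k-k_1))\,\dd t \cdot (k-k_1)$ and invoking the convexity properties of $\omega$ granted by Assumption~\ref{ass: dispersion}(ii),(iii) (which ensure $\mathrm{Hess}\,\omega$ is bounded below by a positive multiple of the identity) gives the lower bound $|\nabla_{k_3}\widetilde{\mathcal{C}}|\gtrsim|k-k_1|$. Together with the uniform bound on $f_\infty$ and the finite surface area of $\{\widetilde{\mathcal{C}}=0\}\cap[0,k_c]^3$, this yields the pointwise estimate $K^{(1)}(k,k_1) \lesssim |k-k_1|^{-1}$.

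Finally, $1/|k-k_1|^2$ is integrable on bounded sets of $\mathbb R^3\times\mathbb R^3$, so $K^{(1)}\in L^2([0,k_c]^3\times[0,k_c]^3)$ and $\mathcal{K}^{(1)}$ is Hilbert--Schmidt, completing the argument. The main obstacle is the gradient lower bound for $\widetilde{\mathcal{C}}$; without the quasi-convexity of $\omega$ the level set $\{\widetilde{\mathcal{C}}=0\}$ could be degenerate and $1/|\nabla\widetilde{\mathcal{C}}|$ non-integrable on it. The hypothesis $\beta=0$ is essential to remove the singular cross-section factors $|k_i|^{-\beta/2}$ that would otherwise ruin the pointwise bound on $K^{(1)}$, and the cut-off $k_c<\infty$ is used both to bound $f_\infty$ uniformly and to bound the surface area of the resonant manifold.
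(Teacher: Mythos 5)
Your overall strategy --- exhibit $\mathcal{K}^{k_c}$ as an integral operator with a square-integrable kernel, hence Hilbert--Schmidt, hence compact --- is exactly the paper's, and your treatment of the $g_3$-piece (your $K^{(3)}$, the paper's $\mathcal{K}_2^{k_c}$) coincides with the paper's up to being slightly cleaner: you observe that the $\alpha$-range of length $\lesssim k_c/|k+k_3|$ cancels the Jacobian factor $|k+k_3|$ against the lower bound $2C_2$ on the denominator, giving a uniformly bounded kernel, whereas the paper splits into the regions $|k+k_3|\gtrless\delta$ (because it carries a general $\beta$ through that computation before specializing). Where you genuinely diverge is the $g_1,g_2$-piece. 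The paper changes variables so that $z=k_2$ becomes the output variable of the kernel and reads off $k_1^{k_c}(k,z)$ as an integral over $k_3$ against $\delta_\Omega$ via the $(\alpha,\theta)$ parametrization, ending with an $L^2$ bound on a kernel carrying a $|k+k_3|^{-1}$ singularity. You instead use the $k_1\leftrightarrow k_2$ symmetry to reduce to a single kernel $K^{(1)}(k,k_1)$, integrate out $k_2$ with $\delta_\Sigma$, and apply the co-area formula in $k_3$, obtaining the pointwise bound $K^{(1)}(k,k_1)\lesssim |k-k_1|^{-1}$, which is square-integrable in dimension $3$. This is an attractive alternative because it makes the location and order of the singularity of the kernel completely explicit (it sits on the diagonal $k_1=k$, where the resonance condition degenerates), rather than hiding it in an unverified ``direct calculation.''

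Two caveats you should address. First, your gradient lower bound $|\nabla_{k_3}\widetilde{\mathcal{C}}|\gtrsim|k-k_1|$, and the uniform bound on the area of $\{\widetilde{\mathcal{C}}=0\}\cap[0,k_c]^3$ (which needs the level set to be a graph with uniformly bounded slope, i.e.\ a two-sided Hessian bound), both rest on $\operatorname{Hess}\omega\geq c\,I$. Assumption \ref{ass: dispersion}(ii) alone does not give this: it controls $\Omega'(r)/r$ but not $\Omega''$, and without monotonicity of $\Omega'$ the vector $\nabla\omega(k+k_3-k_1)-\nabla\omega(k_3)$ could vanish for $k\neq k_1$. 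You correctly invoke (iii) to get $\operatorname{Hess}\omega\geq(2+\Lambda_1)I$, but the lemma as stated (and the paper's proof) assumes only (i),(ii); so either note that your argument proves the lemma under the additional hypothesis (iii) (harmless for the main theorems, which assume it anyway), or supply a substitute for the gradient bound using (ii) only. Second, the uniform-in-$(k,k_1)$ bound on the surface area of the resonant set deserves a sentence: it follows from the two-sided Hessian bound, which makes $\{\widetilde{\mathcal{C}}=0\}$ a Lipschitz graph over the hyperplane orthogonal to $k-k_1$ with slope controlled by $\Lambda_1,\Lambda_2$.
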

\begin{proof} 
For the \emph{compactness}, we write $$\mathcal{K}^{k_c} g(k) = \mathcal{K}_1^{k_c} g - \mathcal{K}_2^{k_c} g$$ where 
\begin{align*}
\mathcal{K}^{k_c}_2 g &:= \int_{[0,k_c]^3} \int_{-\frac{k_c}{|k+k_3|}}^{\frac{k_c}{|k+k_3|}}\int_{0}^{2\pi} \frac{f_{\infty}(k+k_3-z_\alpha) f_{\infty}(z_\alpha)  g(k_3) }{[|k||k+k_3-z_\alpha| |z_\alpha||k_3|]^{\beta/2}}  \frac{|k+k_3-z_{\alpha}|}{\dfrac{\Omega'(|k+k_3-z_{\alpha}|)}{|k+k_3-z_{\alpha}|} + \dfrac{\Omega'(|z_{\alpha}|)}{|z_{\alpha}| }}  \dd\theta\dd\alpha \dd k_3 
\end{align*}
and
\begin{align*}
\mathcal{K}^{k_c}_1 g(k) :=  \int_{[0,k_c]^3}&\int_{-\frac{k_c}{|k+k_3|}}^{\frac{k_c}{|k+k_3|}} \int_{0}^{2\pi} 
\bigg(\frac{g(k_3+k-z_{\alpha}) f_{\infty}(z_\alpha)f_{\infty}(k_3) }{[|k||k+k_3-z_\alpha| |z_\alpha||k_3|]^{\beta/2}} + \\ & + 
 \frac{g(z_{\alpha}) f_{\infty}(k+k_3-z_\alpha) f_{\infty}(k_3)}{[|k||k+k_3-z_\alpha| |z_\alpha||k_3|]^{\beta/2}} \bigg) 
 \frac{|k+k_3-z_{\alpha}|}{\dfrac{\Omega'(|k+k_3-z_{\alpha}|)}{|k+k_3-z_{\alpha}|} + \dfrac{\Omega'(|z_{\alpha}|)}{|z_{\alpha}| }}\
 \dd\theta\dd\alpha \dd k_3.
\end{align*}
In order to conclude the compactness of both operators $\mathcal{K}^{k_c}_1$, $\mathcal{K}^{k_c}_2$, we identify their integral kernels and show that their kernels are square integrable. Then as Hilbert–Schmidt integral operators they are compact. 

\noindent
\emph{Kernel of $\mathcal{K}_2^{k_c}$}: First the kernel of $\mathcal{K}_2^{k_c}$ is directly seen to be 
$$ k_2^{k_c} (k,k_3) =\int_{\alpha, \theta}  \frac{|k+k_3-z_{\alpha}|}{\dfrac{\Omega'(|k+k_3-z_{\alpha}|)}{|k+k_3-z_{\alpha}|} + \dfrac{\Omega'(|z_{\alpha}|)}{|z_{\alpha}| }}  \frac{f_{\infty}(k+k_3-z_\alpha) f_{\infty}(z_\alpha)}{[|k||k+k_3-z_\alpha| |z_\alpha||k_3|]^{\beta/2}}  \dd \alpha \dd \theta.
$$
We consider the $L^2$-norm of $k_2^{k_c}$ and we write 
\begin{align*}
    \int_{[0,k_c]^3} |k_2^{k_c} (k,k_3)|^2 \dd k_3 \lesssim 
\int_{[0,k_c]^3} & \int_{[-\frac{k_c}{|k+k_3|}, \frac{k_c}{|k+k_3|}] } \int_{0}^{2\pi}  \frac{|k+k_3-z_{\alpha}|^2}{\left(\dfrac{\Omega'(|k_3-z_{\alpha}|)}{|k+k_3-z_{\alpha}|} + \dfrac{\Omega'(|z_{\alpha}|)}{|z_{\alpha}| } \right)^2}  \times \\ & 
\times  \frac{[f_{\infty}(k+k_3-z_\alpha) f_{\infty}(z_\alpha) ]^2}{ [|k||k+k_3-z_\alpha| |z_\alpha||k_3|]^{\beta}}  \dd \alpha \dd  \theta \dd k_3
\end{align*} 
Using now that $\frac{\Omega'(x)}{x} \gtrsim \mathcal{O}(1)$ due to Assumption \ref{ass: dispersion}(ii), this is upper bounded by 
\begin{align*}
& \| f_\infty \|_{\infty}^4 \int_{[0,k_c]^3} \int_{[-\frac{k_c}{|k+k_3|}, \frac{k_c}{|k+k_3|}] } 
\int_{0}^{2\pi} \frac{|k+k_3-z_{\alpha}|^2}{[|k||k+k_3-z_\alpha| |z_\alpha||k_3|]^{\beta}}  \dd \alpha \dd  \theta \dd k_3 = \\ & 
 \| f_\infty \|_{\infty}^4   \int_{[0,k_c]^3} \int_{[-\frac{k_c}{|k+k_3|}, \frac{k_c}{|k+k_3|}] }\int_{0}^{2\pi} \mathbbm{1}_{|k+k_3|\geq \delta} \frac{ |k+k_3-z_{\alpha}|^2}{[|k||k+k_3-z_\alpha| |z_\alpha||k_3|]^{\beta}}  \dd \alpha \dd  \theta \dd k_3 + \\  &
  \| f_\infty \|_{\infty}^4  \int_{[0,k_c]^3} \int_{[-\frac{k_c}{|k+k_3|}, \frac{k_c}{|k+k_3|}] }\int_{0}^{2\pi} \mathbbm{1}_{|k+k_3|\leq \delta} \frac{|k+k_3-z_{\alpha}|^2 }{[|k||k+k_3-z_\alpha| |z_\alpha||k_3|]^{\beta}} \dd \alpha \dd  \theta \dd k_3 = I^\delta_1 + I^\delta_2
\end{align*}
for some positive $0< \delta$. We see that the first term $ I^\delta_1$ is bounded by $ 4\| f_\infty \|_{\infty}^4 \pi k_c k_c^{4}\delta^{-4\beta-1}$ since we are away from $0$. 
For the second term  $I^\delta_2$ we notice that since all the frequencies are positive, $|k+k_3| \leq \delta$ implies that  $|k+ k_3-z_\alpha | < \delta$, thus $I^\delta_2$ is finite as $\delta$ goes to $0$ as long as $\beta \in [0,1/4]$.  



\noindent
\emph{Kernel of $\mathcal{K}_1^{k_c}$}:
Regarding the kernel of $\mathcal{K}_1^{k_c}$, after a change of variables we have 
\begin{align*}
  \mathcal{K}_1^{k_c}g(k) &=  \int_{[0,k_c]^3}\int_{\alpha, \theta} \frac{f_{\infty}(z_\alpha)f_{\infty}(k_3+k-z_\alpha) }{[|k||k+k_3-z_\alpha| |z_\alpha||k_3|]^{\beta/2}} g(z_{\alpha}) \times \\
  & \times 
  \left(
  \frac{|z_{\alpha}|}{\dfrac{\Omega'(|k+ k_3-z_{\alpha}|)}{|k+k_3-z_{\alpha}|} + \dfrac{\Omega'(|z_{\alpha}|)}{|z_{\alpha}| }}
  +
  \frac{|k+k_3-z_{\alpha}|}{\dfrac{\Omega'(|k+ k_3-z_{\alpha}|)}{|k+k_3-z_{\alpha}|} + \dfrac{\Omega'(|z_{\alpha}|)}{|z_{\alpha}| }}
  \right) \dd \alpha \dd \theta \dd k_3 \\ & = 
   \int_{[0,k_c]^6}  g(z) \frac{f_{\infty}(z)f_{\infty}(k_3+k-z)}{[|k||k+k_3-z| |z||k_3|]^{\beta/2}}\frac{|z|+ |k+k_3-z| }{|k+k_3|}  \delta_{\Omega} \dd z\dd k_3.
\end{align*}
So the kernel $k_1^{k_c} $ is by inspection 
$$ k_1^{k_c}(k,z)  =\int_{[0,k_c]^3} \frac{f_{\infty}(z)f_{\infty}(k_3+k-z)}{[|k||k+k_3-z| |z||k_3|]^{\beta/2}} \frac{|z|+ |k+k_3-z| }{|k+k_3|}  \delta_{\Omega}  \dd k_3.$$
We then compute 
\begin{align*} 
& \int_{[0,k_c]^3} |k_1^{k_c} (k,z)|^2 \dd z \lesssim \\
  & \int_{[0,k_c]^3}\int_\alpha \int_\theta \frac{f_{\infty}(z)^2f_{\infty}(k_3+k-z)^2}{[|k||k+k_3-z| |z||k_3|]^{\beta}} \frac{[|z|+ |k+k_3-z|]^2}{|k+k_3|}\frac{\dd \alpha \dd \theta}{ \dfrac{\Omega'(|k+k_3-z_{\alpha}|)}{|k+k_3-z_{\alpha}|} + \dfrac{\Omega'(|z_{\alpha}|)}{|z_{\alpha}| } } \dd k_3 \lesssim \\ & 
  \int_{[0,k_c]^3}\int_\alpha \int_\theta \frac{[|z|+ |k+k_3-z|]^2}{[|k||k+k_3-z| |z||k_3|]^{\beta} |k+k_3| } \dd \alpha \dd \theta  \dd k_3.
 \end{align*}
and direct calculations show that when $\beta=0$, this is finite, due to the assumptions on the dispersion $\Omega$.
This implies that the operator $\mathcal{K}_1^{k_c}$ is compact (as a Hilbert Schmidt operator). 
\end{proof}
\begin{remark} 
Notice that for the compactness we need to take the exponent of the cross-section term to be $\beta = 0$ as otherwise we can not control the singularity that appears in the kernel. If we let $\beta \in \R$, we would have to cut-off further all the frequencies away from $0$. 
\end{remark}
\begin{proof}[Proof of Theorem \ref{theo:coerc linearised}]
An application then of Weyl's theorem yields directly that the essential spectrum of the operator $L_{k_c}$ is the essential spectrum of $\mathcal{A}^{k_c}$. The coercivity of $\mathcal{A}^{k_c}$ implies therefore the existence of a spectral gap of $L_{k_c}$. 
\end{proof}

\section{From the linearized to the non-linear operator}

In this section we aim at proving that the rate of approach to Rayleigh-Jeans equilibria for the nonlinear operator $\mathcal{L}$ is governed by the relaxation rate of the linearized operator, when the dispersion relation is weakly perturbed around the quadratic one.

We take $\beta=0$ and we first rewrite the evolution problem \eqref{eq:NL WKE} as follows, using that $n(t,k)=(\omega+1 \cdot k + 1)^{-1}(1+g(t,k))$.  
\begin{equation}\label{eq:NL evolution}
\begin{split}
    \partial_t g(t,k) &= -f_{\infty}^{-1}(k)
     \int_{\mathbb{R}^9} \left( n n_2 n_3 + n n_1 n_3 - n_1 n_2 n - n_1 n_2 n_3  \right) \delta_{\Omega} \delta_{\Sigma} \dd k_1 \dd k_2 \dd k_3  \\
 &= -L g(t,k) +  \Gamma(g,g) + Q(g,g,g)
 \end{split}
\end{equation}
where $ \Gamma(g,g)$ is the bilinear operator 
\begin{equation}
\begin{split}
 \Gamma(g,g) &= \int_{\mathbb{R}^9} f_{\infty}(k_1)f_{\infty}(k_2)f_{\infty}(k_3) 
  \Big\{ ( f_{\infty}^{-1}(k_1)-f_{\infty}^{-1}(k) ) g_2g_3 +  ( f_{\infty}^{-1}(k_1)- f_{\infty}^{-1}(k_3) ) g_2g 
  \\& \qquad+
    ( f_{\infty}^{-1}(k_1)+f_{\infty}^{-1}(k_2) ) gg_3+ 
   (f_{\infty}^{-1}(k_2)-f_{\infty}^{-1}(k) ) g_1g_3 +  (f_{\infty}^{-1}(k_2)-f_{\infty}^{-1}(k_3) ) gg_1 
   \\ & \qquad\qquad +
     ( -f_{\infty}^{-1}(k_3)-f_{\infty}^{-1}(k_1) ) g_2g_1\Big\}\delta_{\Omega} \delta_{\Sigma} \dd k_1 \dd k_2 \dd  k_3
 \end{split}
 \end{equation}
 and $Q(g,g,g)$ is the term with the cubic nonlinearity 
 \begin{equation}\label{eq:Q_Qi}
\begin{split}
 Q(g,g,g) &=   \int_{\mathbb{R}^9}  f_{\infty}(k_1)f_{\infty}(k_2)f_{\infty}(k_3)   \big\{   
f_{\infty}^{-1}(k_1) gg_2g_3 + f_{\infty}^{-1}(k_2) gg_1g_3 \\ & -f_{\infty}^{-1}(k_3) gg_1g_2 - f_{\infty}^{-1}(k) g_1g_2g_3 
 \big\} 
 \delta_{\Omega} \delta_{\Sigma} \dd k_1 \dd k_2 \dd k_3 \\ 
 &=:Q_1^{+}(g,g,g) + Q_2^{+}(g,g,g) - Q_1^{-}(g,g,g) - Q_2^{-}(g,g,g).
  \end{split}
 \end{equation}
 We rewrite the quadratic term as 
 \begin{align*}
 \Gamma&(g,g) = f_{\infty}(k_1)f_{\infty}(k_2)f_{\infty}(k_3)\int_{\mathbb{R}^9} \big\{ ( f_{\infty}^{-1}(k_1)- f_{\infty}^{-1}(k_3)) (gg_2 - g_1g_3) \\ & 
 + ( f_{\infty}^{-1}(k_1)+ f_{\infty}^{-1}(k_2))(g g_3 -g_1g_2 )  +
  ( f_{\infty}^{-1}(k_2)- f_{\infty}^{-1}(k_3))(g g_1-g_2g_3 )\big\}\delta_{\Omega} \delta_{\Sigma} \dd k_1 \dd k_2 \dd  k_3 \\ & = 
\Gamma^1(g,g) + \Gamma^2(g,g) +\Gamma^3(g,g). 
\end{align*}  
 We need estimates on the $L^2$ - norm of the quadratic and cubic nonlinearities on the collision kernel in terms of the $L^2$ - norm of $g$. Estimates on weighted $L^2$ spaces have been done already in \cite{GIT20} without cut-off on the frequencies, thus here we sketch the proof in our notation for the readers convenience. We adapt slightly the proof to include a bounded perturbation around the quadratic dispersion relation. 
   
 \begin{lemma} \label{lem:NL estimate}
 Let $\beta =0$ and $\omega(k)=|k|^2 + s(|k|)$ satisfying assumption \ref{ass: dispersion} (i),(ii),(iii). We have 
 \begin{equation}
 \| \Gamma(g,g)  + Q(g,g,g)\|_{L^2([0,k_c]^3)} \lesssim C_1 ( \|g \|_{L^2([0,k_c]^3)}^2 + \|g \|_{L^2([0,k_c]^3)}^3) .
 \end{equation}
 \end{lemma}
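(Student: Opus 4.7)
The plan is to bound the bilinear part $\Gamma(g,g)$ and the cubic part $Q(g,g,g)$ separately, obtaining $\|\Gamma(g,g)\|_{L^2} \lesssim \|g\|_{L^2}^2$ and $\|Q(g,g,g)\|_{L^2} \lesssim \|g\|_{L^2}^3$ respectively; the lemma then follows by the triangle inequality. Two standing ingredients are used throughout. First, on the cube $[0,k_c]^3$ both $f_\infty$ and $f_\infty^{-1}$ are uniformly bounded, so every prefactor such as $f_\infty(k_i)$ and every sum or difference $f_\infty^{-1}(k_i) \pm f_\infty^{-1}(k_j)$ is controlled by a constant depending only on $k_c$. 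Second, the parametrisation \eqref{eq:parametrisation} of the resonant manifold, together with the lower bound $\Omega'(x)/x \geq C_2$ from Assumption~\ref{ass: dispersion}(ii), shows that the resonant measure has bounded density with respect to $dk_3\, d\alpha\, d\theta$.

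For the bilinear term I would work on the decomposition $\Gamma = \Gamma^1 + \Gamma^2 + \Gamma^3$ displayed in the excerpt and, within each summand, treat separately every expression of the schematic form
\[
T_{a,b}(k) = \int c(k,k_1,k_2,k_3)\, g(k_a) g(k_b)\, \delta_\Omega \delta_\Sigma \, dk_1 dk_2 dk_3,
\qquad a,b \in \{0,1,2,3\},
\]
with the convention $k_0 := k$ and $c$ uniformly bounded. When one of $a,b$ equals $0$, the factor $g(k)$ is pulled outside the integral and what remains is bounded by $\|g\|_{L^2}$ via Cauchy--Schwarz in the parametrised variables; squaring and integrating in $k$ yields $\|T_{0,b}\|_{L^2} \lesssim \|g\|_{L^2}^2$. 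When neither index is $0$ I apply Cauchy--Schwarz directly inside the resonant integration, split the two $g$-factors between two $L^2$ slots, and change variables back to the Cartesian measure on the pair $(k_a, k_b)$; the resulting Jacobian is bounded above and away from zero by the parametrisation. Summing the finitely many summands gives the quadratic estimate.

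The cubic term $Q = Q_1^+ + Q_2^+ - Q_1^- - Q_2^-$ in \eqref{eq:Q_Qi} is treated by the same scheme with three factors of $g$: one $g$-factor is isolated (either pointwise as $g(k)$ or via one Cauchy--Schwarz extracting a single $L^2$ norm), and the remaining bilinear integrand is bounded exactly as for $\Gamma(g,g)$. This produces $\|Q(g,g,g)\|_{L^2} \lesssim \|g\|_{L^2}^3$ and completes the proof.

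The main obstacle is verifying that each change of variables used to realign the residual integration with its natural Lebesgue measure on $[0,k_c]^3$ is a diffeomorphism with Jacobian uniformly bounded above and away from zero. For $\omega(k)=|k|^2$ this is immediate because the resonant surface is an explicit sphere, but for the perturbed dispersion of Assumption~\ref{ass: dispersion}(i)--(iii) this is exactly where the positive-definiteness of $\operatorname{Hess} s$ in hypothesis (iii) enters, guaranteeing that the implicit function $r = r_\alpha$ defining the resonant surface is non-degenerate and that the perturbed parametrisation remains a local diffeomorphism with Jacobian in a compact subinterval of $(0,\infty)$. Secondary technical points, such as the boundedness of the weight $|k+k_3|/(\Omega'(|k+k_3-z_\alpha|)/|k+k_3-z_\alpha|+\Omega'(|z_\alpha|)/|z_\alpha|)$ and the absence of any singularity at $|k+k_3|=0$ (the factor sits in the numerator), are handled exactly as in the proof of Lemma~\ref{lem: compactness K}.
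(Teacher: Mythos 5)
There is a genuine gap at the analytic core of your argument: the claim that, after Cauchy--Schwarz on the resonant set, one can ``change variables back to the Cartesian measure on the pair $(k_a,k_b)$'' with a Jacobian bounded above and away from zero. The parametrisation \eqref{eq:parametrisation} only controls the density of $\delta_\Omega\,dz\,dk_3$ with respect to $d\alpha\,d\theta\,dk_3$; what your step actually needs is that the marginal of the resonant measure in the remaining wavenumbers be uniformly bounded, and it is not. Already for $\omega(k)=|k|^2$ one has $\nabla_{k_2}\Omega=2(k-k_1)$ after eliminating $k_3$, so $\int\delta_\Omega\,dk_2\sim |k-k_1|^{-1}$ blows up on the diagonal; hence $\sup_k\int |g(k_1)|^2\delta_\Omega\delta_\Sigma$ is not controlled by $\|g\|_{L^2}^2$ and the splitting of the two $g$-factors ``between two $L^2$ slots'' does not close. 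More fundamentally, the term $Q_2^-$ contains $g_1g_2g_3$ with no factor evaluated at $k$, so nothing can be pulled out, and bounding $\int g_1g_2g_3\,\delta_\Omega\delta_\Sigma$ in $L^2_k$ by $\|g\|_{L^2}^3$ amounts to restricting $L^2$ data to a codimension-one hypersurface --- a restriction/Strichartz-type statement that is false without exploiting curvature, and certainly not a consequence of a bounded change of variables.

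This is precisely why the paper's proof proceeds differently. For $Q_2^-$ it represents $\delta_\Sigma\delta_\Omega$ as an oscillatory integral in dual variables $(\mu,\nu)$, introduces the extension operator $Tg(\mu,\nu)=\int g(x)e^{i\mu\cdot x+i\nu\omega(x)}\dd x$, and reduces everything to the bound $\int_{\R}\|Tg(\cdot,\nu)\|_{L^\infty}^2\dd\nu\lesssim\|g\|_{L^2_s}^2$, proved by a $TT^*$ argument using the dispersive decay $r(\nu-\nu')\sim|\nu-\nu'|^{-1+\theta}(1+|\nu-\nu'|^2)^{-\theta}$ of the Schr\"odinger-type kernel. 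For $Q_1^-$, where $g(k)$ can be pulled out, it mollifies the delta, Taylor-expands the resonance function, and invokes \cite[Lemma 4.3]{GIT20} together with another $TT^*$ argument to bound the resulting bilinear operator. Your instinct that Assumption \ref{ass: dispersion}(iii) must enter is correct, but its role is to supply the curvature driving these oscillatory estimates and to keep the quadratic form $k_1\cdot(2+\nabla^2 S(\xi))k_2$ non-degenerate, not merely to make $r_\alpha$ a smooth non-degenerate parametrisation. To repair your proof you would have to import these restriction-type estimates; Cauchy--Schwarz plus Jacobian bookkeeping is not enough.
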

 
 \begin{proof}
The quadratic nonlinearity is handled easily as in the case of the Boltzmann collision operator. 
  For the cubic nonlinearity we want to show that $\|Q(g,g,g)\|_{L^2} \lesssim \|g\|_{L^2}^3$. 
 
 We estimate first the term $$ Q_2^{-}(g,g,g) = -\int_{[0,k_c]^9}  \frac{f_{\infty}(k_1)f_{\infty}(k_2)f_{\infty}(k_3)}{[|k||k_1||k_2||k_3|]^{\beta/2}} g(k_1) g(k_2) g(k_3) \delta_{\Omega}\delta_{\Sigma} \dd k_1 \dd k_2 \dd k_3 $$ as follows by applying a $TT^*$-type argument:
 \begin{equation}
\begin{split}
&| Q_2^{-}(g,g,g) |  \lesssim\\& \left\vert \int_{\R^3 \times \R} \int_{[0,k_c]^9}g(k_1) g(k_2) g(k_3) e^{i \mu \cdot (-k_1-k_2+k+k_3)} e^{i \nu (-\omega_1-\omega_2+\omega+\omega_3)}   \dd k_1 \dd k_2 \dd k_3 \dd \mu \dd \nu \right\vert  \lesssim \\
& C  \Big\vert \int_{\R^3 \times \R} 
 \left(\int_{[0,k_c]^3} g(k_1) e^{-i\mu \cdot k_1-i\nu\omega_1 }  \dd k_1 \right)
 \left(\int_{[0,k_c]^3} g(k_2) e^{-i\mu \cdot k_2-i\nu\omega_2 }  \dd k_2 \right) \times \\ &\qquad\qquad\qquad\qquad
 \times \left(\int_{[0,k_c]^3} g(k_3) e^{i\mu \cdot k_3+i\nu\omega_3 }  \dd k_3 \right) e^{i\mu \cdot k+i\nu\omega} \dd \mu \dd \nu  \Big\vert 
 \end{split}
\end{equation}
so that for a parameter $s > 1/2 $ and denoting by $T g(\mu,\nu):= \int g(x) e^{i \mu \cdot x+ i\nu \omega(x)} \dd x$ we write: 
 \begin{align*}
\| Q_2^{-}(g,g,g)  &\|_{L^2([0,k_c]^3)} \lesssim \|    Q_2^{-}(g,g,g) \|_{L^2_s} \lesssim C \left\| \int_{\R^3\times \R} Tg(\mu,\nu)^2\ \overline{T\bar{g}} (\mu,\nu)e^{i\mu \cdot k+i\nu\omega} \langle k \rangle^s \dd \mu \dd \nu   \right\|_{L^2_s} \\
&\lesssim \left\| \int_{\R} \mathcal{F}^{-1}[ Tg(k,\nu)^2\ \overline{T\bar{g}} (k,\nu) ]e^{i\nu\omega} \langle k \rangle^s \dd \nu   \right\|_{L^2_s}
\lesssim \int_{\R} \| Tg(\cdot,\nu)^2\ \overline{T\bar{g}} (\cdot,\nu) \|_{H^s} \dd\nu
 \end{align*} 
 where for the last inequality we used that $k$ lives in a bounded domain.  Then this is upper bounded by 
 \begin{equation}
\begin{split}
 \int_{\R} \| Tg(\cdot,\nu)\|_{L^{\infty}}^2 \| T g (\cdot,\nu) \|_{H^s} \dd\nu & \lesssim \sup_{\nu \in \R} \| T g (\cdot,\nu) \|_{H^s}  \int_{\nu} \| Tg(\cdot,\nu)\|_{L^{\infty}}^2 \dd \nu \\ & \lesssim \| g\|_{L_s^2}  \int_{\R} \| Tg(\cdot,\nu)\|_{L^{\infty}}^2 \dd\nu.
 \end{split}
\end{equation}
It remains therefore to provide the bound 
$$ \int_{\R} \| Tg(\cdot,\nu)\|_{L^{\infty}}^2 \dd\nu  \lesssim \|g\|_{L_s^2}^2. $$
It is enough to show that 
(assuming without loss of generality that $g(k) = \langle k \rangle^{-s} h(k)$ for $\|h\|_{L^2}=1$) for $F \in L_{\mu}^1L_{\nu}^2$, $$\left\vert  \int_{\R\times\R^3} Tg(\mu,\nu)F(\mu,\nu) \dd\mu \dd\nu \right\vert \lesssim_s 1$$ and since the left-hand side is bounded by
\begin{align*}
\left\vert  \int_{\R\times\R^3} Tg(\mu,\nu)F(\mu,\nu) \dd\mu \dd\nu \right\vert \lesssim \left\| \langle k \rangle^{-s}\int_{\R\times\R^3}   F(\mu,\nu)e^{-i\mu\cdot k-i\nu\omega(k)} \dd\mu \dd\nu \right\|_{L^2},
\end{align*}
equivalently we shall provide the bound 
$$ \left\vert \int_{\R^3} \int_{(\R^3\times \R)^2} \langle k \rangle^{-2s} F(\mu,\nu)F(\mu',\nu') e^{-ik\cdot\mu - i\nu\omega(k)}e^{ik\cdot\mu' + i\nu'\omega(k)}\dd\mu \dd\nu\dd\mu' \dd\nu' \dd k \right\vert \lesssim 1. $$ 

Now if we were to bound $ \int \langle k \rangle^{-2s}e^{-ik\cdot(\mu-\mu') - i(\nu-\nu')\omega(k)} \dd k$  uniformly in $\mu-\mu' \in \R^3$ and so that it is integrable in $\nu-\nu'$, we can write the left-hand side as 
$$ \int_{\R^2} r(\nu-\nu')  \| F(\cdot, \nu)\|_{L^1}\| F(\cdot, \nu')\|_{L^1}  \dd \nu \dd\nu' . $$
Due to properties of the Schr\"{o}dinger kernel, when $\omega(k)=|k|^2+ s(|k|)$,  taking $s>1/2$ we can find a $\theta = \theta(s) >0$ so that $r(\nu-\nu') \sim  e^{-ic(\nu - \nu' )}\frac{|\nu-\nu'|^{-1+\theta}}{(1+|\nu-\nu'|^2 )^{\theta}} $, thus under the assumption that $\|F(\cdot, \nu)\|_{L_\mu^1} \lesssim 1$, we have the desired bound for $Q_2^{-}$: $\| Q_2^{-}(g,g,g)  \|_{L^2([0,k_c]^3)}$ is upper bounded by $\| g\|_{L^2([0,k_c]^3)}^3$ since the weight is bigger than $1$ and bounded.


Next, for $Q_1^{-}(g,g,g)$ we have 
 \begin{align*} 
|Q_1^{-}(g,g,g)|& := \left\vert \int_{[0,k_c]^9} \frac{f_{\infty}(k_1)f_{\infty}(k_2)f_{\infty}(k_3)}{[|k||k_1||k_2||k_3|]^{\beta/2}}      \frac{g(k)g(k_1)g(k_2)}{f_{\infty}(k_3)}   \delta_{\Omega} \delta_{\Sigma} \dd k_1 \dd k_2 \dd k_3 \right\vert  \\ & = 
\left\vert  \int_{[0,k_c]^6} \frac{ f_{\infty}(k_1)f_{\infty}(k_2)f_{\infty}(k_1+k_2-k)}{[|k||k_1||k_2||k_1+k_2-k|]^{\beta/2}}    \frac{g(k)g(k_1)g(k_2) }{f_{\infty}(k_1+k_2-k)}   \delta_{\Omega} \dd k_1 \dd k_2  \right\vert \\ &  \lesssim  \left\vert  g(k) \int_{[0,k_c]^6} g(k_1) g(k_2)  \delta_{\Omega}
\dd k_2 \dd k_3  \right\vert. 
\end{align*}
We mollify the dirac $\delta_{\Omega}$ in the integrand with a smooth function $\delta^\varepsilon_{\Omega} \in C_c^\infty([-1,1])$ of mass $1$. So for any $\varepsilon \in (0,1]$, 
it is enough to show that 
\begin{align} \label{eq:T_2}
 \left\vert \int_{[0,k_c]^6} g(k_1)g(k_2)  \delta_{\omega(k_1)+\omega(k_2)-\omega(k)-\omega(k_1+k_2-k)}^{(\varepsilon)}
\dd k_1 \dd k_2 \right\vert  \lesssim \| g\|_{L^2([0,k_c]^3)}  \| g\|_{L^2([0,k_c]^3)}. 
\end{align}

We Taylor expand and apply mean-value theorem in order to write for $\theta, \theta' \in (0,1)$
 \begin{align*} 
[ \omega(k)- \omega(k_1) ] &+ [ \omega(k_1+k_2-k) -\omega(k_2)] \\ & =
 \nabla\omega(k_1) \cdot(k-k_1) + (k-k_1)^T \nabla^2 \omega(\theta k +(1-\theta)k_1)) (k-k_1)   \\ & + \nabla \omega(k_2) \cdot (k_1-k) + (k_1-k)^T \nabla^2 \omega(\theta (k_1+k_2-k) +(1-\theta)k_2)) (k_1-k)\\ & = 
 (k-k_1)\cdot[\nabla\omega(k_1) - \nabla\omega(k_2)] 
 \\ & + (k-k_1)^T [\nabla^2 \omega(\theta (k_1+k_2-k) +(1-\theta)k_2))+  \nabla^2 \omega(\theta' k +(1-\theta')k_1)) ](k-k_1).
 \end{align*}

Assuming that $\omega(x) = |x|^2+ s(|x|)$, with $s(|x|)=S(x)$, and  $ \|s\|_{L^{\infty}} \leq C_1$ this equals to for $\theta, \theta' \in (0,1) $:
\begin{align*} 
2 &(k-k_1) \cdot (k-k_2) + (k-k_1)\cdot \nabla [S (k_1) - S(k_2)] + 
\\ &+ (k-k_1)^T[\nabla^2 S(\theta k +(1-\theta)k_1)+ \nabla^2 S( \theta' (k_1+k_2-k) +(1-\theta')k_2 )](k-k_1).
\end{align*}

The left-hand side of \eqref{eq:T_2} is written as 
\begin{align*} 
 &\Bigg\vert \int_{[0,k_c]^6} g(k_1)g(k_2)
 \delta^{(\varepsilon)} \Big(2 (k-k_1) \cdot (k-k_2) + (k-k_1)\cdot \nabla [S (k_1) - S(k_2)] + \\ & (k-k_1)^T[\nabla^2 S(\theta k +(1-\theta)k_1)+ \nabla^2 S( \theta' (k_1+k_2-k) +(1-\theta')k_2 )](k-k_1) \Big) 
\dd k_1 \dd k_2 \Bigg\vert \\ 
& \lesssim 
\Bigg\vert \int_{[-k_c,k_c]^6} 
 \delta^{(\varepsilon)} \Big(2 k_1 \cdot k_2 + 2 \Lambda_i |k_1|^2 + k_1  \cdot \nabla [S (k-k_1) - S(k-k_2)] 
  \Big) \frac{G(k_1)}{\langle k+k_1 \rangle^s} \frac{G(k_2)}{\langle k+k_2 \rangle^s}
\dd k_1 \dd k_2 \Bigg\vert
\\ 
&= \Bigg\vert 
 \int_{[-k_c,k_c]^6} 
 \delta^{(\varepsilon)} 
 \Big(2 k_1 \cdot k_2 + 2 \Lambda_i |k_1|^2 + k_1  \cdot   \nabla^2 S (\theta'' (k-k_1) + (1-\theta'')(k-k_2)) (k_2-k_1) \Big) \times \\& \qquad\qquad\qquad\qquad\qquad\qquad\qquad\qquad\qquad\qquad\qquad\times  
\frac{G(k_1)}{\langle k+k_1 \rangle^s} \frac{G(k_2)}{\langle k+k_2 \rangle^s} \dd k_1 \dd k_2 \Bigg\vert
\end{align*} 
where we performed a change of variables and where $G(k_1)  = \langle k+k_1 \rangle^sg(k-k_1)$ for some parameter $s>1/2$. Also $\Lambda_i=\Lambda_1$ if we are on the decreasing side of the mollified function and  $\Lambda_2$ if we are on the increasing. Finally we applied again mean-value theorem with interpolation parameter $\theta'' \in (0,1)$.

Now equivalently we want to prove the $L^2$-boundedness of the following linear operator, uniformly in $\varepsilon$ and $k$:
\begin{align*} 
L_2 G(k_1) &: = \int_{[-k_c,k_c]^3} \delta^{(\varepsilon)} 
 \Big(2 k_1 \cdot k_2 + 2 \Lambda_i |k_1|^2 + k_1  \cdot   \nabla^2 S (\theta'' (k-k_1) + (1-\theta'')(k-k_2)) (k_2-k_1) \Big)  \times \\& \qquad\qquad\qquad\qquad\qquad\qquad\qquad\qquad\qquad\qquad\qquad\times \frac{G(k_2)}{\langle k+k_1 \rangle^s \langle k+k_2 \rangle^s}  \dd k_2 \\ 
 & =  \int_{[-k_c,k_c]^3} \delta^{(\varepsilon)} 
 \Big(k_1 \cdot (2+ \nabla^2 S (\xi )k_2 + a \Big) \frac{G(k_2)}{\langle k+k_1 \rangle^s \langle k+k_2 \rangle^s}  \dd k_2,
\end{align*} 
where $a = 2 \Lambda_i |k_1|^2 - k_1 \cdot \nabla^2 S(\xi)k_1$ is independent of $k_2$. 
Since $\nabla^2 S$ is a diagonal matrix with positive eigenvalues, after a change of variables $k_2 \mapsto (2+ \nabla^2 S (\xi ) ) k_2=: Dk_2$, up to some constants we need to bound the operator 
$$ L_2 \bar{G}(k_1) = \int_{k_2 \in C_{k_c}} \delta^{(\varepsilon)} \Big( k_1 \cdot k_2 + a \Big)\frac{\bar{G}(k_2)}{\langle k+k_1 \rangle^s \langle k+D^{-1}k_2 \rangle^s}  \dd k_2
$$
where $C_{k_c}$ is a compact domain and $\bar{G} (k_2) = G(D^{-1}k_2)$. As we can now apply  \cite[Lemma 4.3]{GIT20}, which bounds the integral kernel of such operator, we conclude the boundedness of $L_2$ by a $TT^*$ argument as in \cite[subsection 4.2]{GIT20}.
 


Similar calculations work for the remaining two terms: \\ $Q_1^{+}(g,g,g):= \int_{[0,k_c]^9} f_{\infty}(k_1)f_{\infty}(k_2)f_{\infty}(k_3) \big\{   f_{\infty}(k_1)^{-1} g(k)g(k_2)g(k_3) \big\}  \delta_{\Omega} \delta_{\Sigma} \dd k_1 \dd k_2 \dd k_3$ and 
$Q_2^{+}(g,g,g):= \int_{[0,k_c]^9} f_{\infty}(k_1)f_{\infty}(k_2)f_{\infty}(k_3) \big\{   f_{\infty}(k_2)^{-1} g(k)g(k_1)g(k_3) \big\}  \delta_{\Omega} \delta_{\Sigma} \dd k_1 \dd k_2 \dd k_3$.


  \end{proof}

  \subsection{Existence and uniqueness of solution close to Rayleigh-Jeans} \label{subsect: existence}
  
  In this section we show the existence of solutions close to the Rayleigh-Jeans equilibrium. For such solutions we will then show that they are $L^2$-stable. For the existence we will proceed via a fixed point argument. 
 Let $S^t$ be the semigroup generated by $L_{k_c}$. Then Duhamel's formula reads
 \[
 g(t,k) = S^t g(0,k) + \int_0^t S^{t-s}  \{ \Gamma(g_s,g_s)(k)  + Q(g_s,g_s,g_s)(k) \} \dd s. 
 \]
Motivated by this, we define the following operator   
\[
\mathcal{F}g(k) :=  S^tg(0,k)+ \int_0^t S^{t-s}  \{ \Gamma(g_s,g_s)(k)  + Q(g_s,g_s,g_s)(k) \} \dd s.
 \]
 We also introduce the norm $\|g\| =\sup_{t \geq 0} \|g(t,\cdot) \|_{L^2 ([0,k_c]^3)}$. 
 
 Next we observe that we can decompose the solution $g$ into an element on the subspace of collision invariants and an element vertical to this: $g_t = g_t^1+g_t^2$ with $g_t^1=\Pi g_t$ being the component along $\operatorname{Ker}(L_{k_c})$ which is  constant in time as the kinetic wave equation that we consider is the spatial homogeneous one, and $g_t^2 = \Pi^{\perp}g_t$. Here with $\Pi$ we denote the orthogonal projection on $\operatorname{Ker}(L_{k_c})$. Then 
 $$ g_t^2(k) = S^t g_0^2(k) + \int_0^t  S^{t-s}  \{ \Gamma(g_s,g_s)(k)  + Q(g_s,g_s,g_s)(k) \}  \dd s. $$
 
  Fix a ball $H= \left\{g \in C((0,\infty);L^2([0,k_c]^3)): \|g \|_{L^2([0,k_c]^3)} \leq c(\lambda) \right\}$ for a constant $c(\lambda)$ depending on $\lambda = \lambda(k_c)$ to be determined later. We then want to show that as soon as the initial data satisfy $\|g_0\|_{L^2([0,k_c]^3)} \leq c(\lambda)/2$: 
  \begin{itemize} 
  \item $ \mathcal{F} H \subset H $ and 
  \item the following contraction property for $\mathcal{F}$: when $g, \tilde{g} \in H$, $\|\mathcal{F}g - \mathcal{F} \tilde{g} \| \leq c \| g-\tilde{g}\|$, for $c<1$. 
  \end{itemize}
  
Let $g \in H$. We calculate 
  \[
  \| \mathcal{F} g_2 \|_{L^2} \lesssim \| g_0\|_{L^2} e^{- \lambda t} + \int_0^t e^{- \lambda (t-s)} \| \Gamma(g_s,g_s)  + Q(g_s,g_s,g_s)\| \dd s
  \]
  where the exponential relaxation of the semigroup of the linearized operator $L_{k_c}$ with spectral gap $ \lambda $ is implied by Grönwall's inequality, using the coercivity property, cf Theorem \ref{theo:coerc linearised}. This in particular means that we assume $g_t \in \operatorname{Ker}(L_{k_c})^{\perp}$ for all $t \geq 0$. 
  Then due to Lemma \ref{lem:NL estimate} the integrand is bounded by 
 \begin{align*}
  e^{- \lambda (t-s)} \| \Gamma(g_s,g_s)  + Q(g_s,g_s,g_s)\| & \lesssim C_1 e^{- \lambda (t-s)} \Big( \|g_s \|_{L^2([0,k_c]^3)}^2 + \|g_s \|_{L^2([0,k_c]^3)}^3 \Big) \\ 
  &\lesssim C_1 e^{- \lambda (t-s)} \left( c(\lambda )^2 + c(\lambda)^3 \right).
   \end{align*}
So that 
 \begin{equation}
\begin{split}
   \| \mathcal{F} g_2 \|_{L^2} &\lesssim \| g_0\|_{L^2} e^{- \lambda t} +  C_1( c(\lambda)^2 +  c(\lambda )^3 ) \int_0^t  e^{- \lambda (t-s)} ds \\ & \lesssim \| g_0\|_{L^2} e^{- \lambda t} +  \frac{ c(\lambda)^2+  c(\lambda)^3 }{\lambda} C_1 (1- e^{-\lambda t})   \\ & 
   \lesssim  
   \frac{c(\lambda)}{2} e^{- \lambda  t} + C_1\frac{ c(\lambda)^2+  c(\lambda)^3 }{\lambda }  \lesssim  c(\lambda)
\end{split}
\end{equation}
This is for example satisfied when $c(\lambda) = \frac{\sqrt{\lambda}}{2\sqrt{2C_1}}$, where $\lambda = \lambda(k_c)$.

\noindent 
\emph{Contraction of the semigroup in the ball $H$}: 
Moving now to the contraction property we have 

\begin{align*}
&\| \mathcal{F}(g) - \mathcal{F}(\tilde{g}) \|_{L^2([0,k_c]^3)} \leq \int_0^t  \| S^{t-s}  \{ (\Gamma(g_s,g_s) + Q(g_s,g_s,g_s)) - \\ &\qquad\qquad\qquad\qquad\qquad\qquad\qquad\qquad   (\Gamma(\tilde{g}_s,\tilde{g}_s) + Q(\tilde{g}_s,\tilde{g}_s, \tilde{g}_s)) \} \|_{L^2([0,k_c]^3)} \dd s \\ 
& \lesssim \int_0^t e^{-(t-s)\lambda  } \| (\Gamma(g_s,g_s) + Q(g_s,g_s,g_s)) - (\Gamma(\tilde{g}_s,\tilde{g}_s) + Q(\tilde{g}_s,\tilde{g}_s, \tilde{g}_s) ) \|_{L^2([0,k_c]^3)} \dd s \\ 
& \lesssim \int_0^t e^{-(t-s)\lambda  } [ \| \Gamma(g_s,g_s) - \Gamma(\tilde{g}_s,\tilde{g}_s)   \|_{L^2([0,k_c]^3)}  + \| Q(g_s,g_s,g_s) - Q(\tilde{g}_s,\tilde{g}_s, \tilde{g}_s) \|_{L^2([0,k_c]^3)} ]  \dd s \\ 
& \lesssim  \int_0^t e^{-(t-s)\lambda} \Big[
 \sum_{i \in \{1,2,3\}} \| \Gamma^i( g_s+\tilde{g}_s, g_s- \tilde{g}_s ) \|_{L^2([0,k_c]^3)} + \\ & 
 \| Q^{+}(g_s,g_s,g_s)  - Q^{+}(\tilde{g}_s,\tilde{g}_s, \tilde{g}_s)\|_{L^2([0,k_c]^3)} +
 \| Q^{-}(g_s,g_s,g_s)  - Q^{-}(\tilde{g}_s,\tilde{g}_s, \tilde{g}_s)\|_{L^2([0,k_c]^3)}
 \Big] \dd s
\end{align*}
where $
Q^{+}(g,g,g) = Q_1^{+}(g,g,g) + Q_2^{+}(g,g,g)$ and $Q^{-}(g,g,g) =Q_1^{-}(g,g,g)  + Q_2^{-}(g,g,g)$ 
as defined in \eqref{eq:Q_Qi}. 
The first term in the bracket in the integrand consisting of quadratic nonlinearities, is bounded by
$$ \sum_{i \in \{1,2,3\}} \| \Gamma^i( g_s+\tilde{g}_s, g_s- \tilde{g}_s ) \|_{L^2([0,k_c]^3)} \leq \sum_{i \in \{1,2,3\}} \| g_s+\tilde{g}_s\| \|g_s- \tilde{g}_s  \|. $$
For the second term: 
\begin{align*} 
&\| Q^{+}(g_s,g_s,g_s)  - Q^{+}(\tilde{g}_s,\tilde{g}_s, \tilde{g}_s)\|_{L^2([0,k_c]^3)} =\\
& \Big\| \int_{[0,k_c]^9 } f_{\infty}(k_1)f_{\infty}(k_2)f_{\infty}(k_3) 
\Big[ f_{\infty}(k_1)^{-1} g (g_2-\tilde{g}_2)g_3 + f_{\infty}(k_1)^{-1} (g-\tilde{g}) g_3\tilde{g}_2
 \\ &\qquad\qquad 
 + f_{\infty}(k_1)^{-1} \tilde{g}\tilde{g}_2(g_3-\tilde{g}_3)  +   f_{\infty}(k_2)^{-1}g(g_1-\tilde{g}_1)g_3 \\ 
&\qquad\qquad \qquad +f_{\infty}(k_2)^{-1} (g - \tilde{g})\tilde{g}_1g_3+ \omega_2 \tilde{g} \tilde{g}_1(g_3- \tilde{g}_3)   \Big] \dd k_1 \dd k_2 \dd k_3\Big\|_{L^2([0,k_c]^3)}\\
&:=\| Q_1^{+}(g,g- \tilde{g}, g) +  Q_1^{+}(g- \tilde{g}, \tilde{g}, g) +  Q_1^{+}(\tilde{g},\tilde{g}, g- \tilde{g}) +\\
& \qquad\qquad \qquad \qquad\qquad \qquad  Q_2^{+}(g,g-\tilde{g}, g) + Q_2^{+}(g-\tilde{g},\tilde{g}, g) + Q_2^{+}(\tilde{g},\tilde{g}, g- \tilde{g})\|_{L^2([0,k_c]^3)} \\ 
&\lesssim  \|g-\tilde{g}\| ( \|g\|^2 + \|g\| \| \tilde{g} \|+ \|\tilde{g}\|^2 )
\end{align*}
where we applied Lemma \ref{lem:NL estimate}.
For the third term: 
\begin{align*} 
&\| Q^{-}(g_s,g_s,g_s)  - Q^{-}(\tilde{g}_s,\tilde{g}_s, \tilde{g}_s)\|_{L^2([0,k_c]^3)}  =
\\ &  \Big\| \int_{[0,k_c]^9 }  f_{\infty}(k_1)f_{\infty}(k_2)f_{\infty}(k_3)   \Big[  f_{\infty}(k_3)^{-1} (g-\tilde{g})g_1g_2 + f_{\infty}(k_3)^{-1} g_2\tilde{g}(g_1 - \tilde{g}_1)  +
\\ & \qquad f_{\infty}(k_3)^{-1} \tilde{g}\tilde{g}_1(g_2 - \tilde{g}_2)  + f_{\infty}(k)^{-1} g_1g_2(g_3-\tilde{g}_3)\\ 
& \qquad + 
 f_{\infty}(k)^{-1} g_2(g_1-\tilde{g}_1)\tilde{g}_3+ f_{\infty}(k)^{-1} \tilde{g}_1\tilde{g}_3 (g_2-\tilde{g}_2)  
\Big]  \dd k_1 \dd k_2 \dd k_3 \Big\|_{L^2([0,k_c]^3)} \\
&:=  \| Q^{-}_1(g-\tilde{g},g,g) + Q^{-}_1(\tilde{g}, g-\tilde{g},g) + Q^{-}_1(\tilde{g},\tilde{g},g-\tilde{g}) +\\ 
&\qquad\qquad \qquad \qquad\qquad \quad 
  Q_2^{-}(g,g,g-\tilde{g})+ Q_2^{-}( g-\tilde{g},g,\tilde{g}) +  Q_2^{-}( \tilde{g}, g-\tilde{g},\tilde{g}) \|_{L^2([0,k_c]^3)} \\
  &\lesssim  \|g-\tilde{g}\| (\|g\|^2 + \|g\|\|\tilde{g}\|+\|\tilde{g}\|^2 )
\end{align*}
due to the estimates in Lemma \ref{lem:NL estimate} again. 
That is, since $g,\tilde{g} \in H$,
\begin{equation} 
\begin{split} 
\| \mathcal{F}(g_t) - \mathcal{F}(\tilde{g}_t) \|_{L^2([0,k_c]^3)} \lesssim (c(\lambda)+ c(\lambda)^2) \int_0^t e^{-(t-s)\lambda }  \|g_s-\tilde{g}_s\| \dd s. 
\end{split}
\end{equation}
Finally we take the supremum in time $t$ to have that $$\| | \mathcal{F}(g) - \mathcal{F}(\tilde{g}) | \| \leq \frac{1}{2}  \| g-\tilde{g}\| .$$
Thus in this norm $\mathcal{F}$ is a contraction mapping.  Existence and uniqueness of the solution now immediately follows. 

\subsection{Stability around Rayleigh-Jeans in $L^2([0,k_c]^3)$}
In this subsection we show stability for the homogeneous kinetic wave equation for data close to the Rayleigh-Jeans equilibrium.
First we multiply both sides of the equation \eqref{eq:NL evolution} by $g$ and we integrate over the frequency space. Since $g_t=g_t^1 + g_t^2$ and $\partial_t g_t^1 =0$, 
\begin{equation}
\begin{split}
 \partial_t &  \|g_t^2\|_{L^2([0,k_c]^3)}^{2}  = - \langle L_{k_c} g_t^2, g_t ^2\rangle_{L^2([0,k_c]^3)} +  \langle (\Gamma(g_t,g_t)  + Q(g_t,g_t,g_t) ),g_t \rangle_{L^2([0,k_c]^3)} \\
 & \lesssim 
 - \lambda \|g_t^2 \|_{L^2([0,k_c]^3)}^2 +2C \Big\{ \|g_t \|_{L^2([0,k_c]^3)}^2 + \|g_t \|_{L^2([0,k_c]^3)}^3 \Big\}  \|g_t \|_{L^2([0,k_c]^3)}. 
  \end{split}
\end{equation}
This implies that as long as $\|g_t\|_{L^2([0,k_c]^3)}+ \|g_t\|_{L^2([0,k_c]^3)}^2 < \frac{\lambda}{4C}$, 
\begin{equation}
\begin{split}
\partial_t \| g_t^2 \|_{L^2([0,k_c]^3)} &\leq - \lambda \|g_t^2 \|_{L^2([0,k_c]^3)} + \frac{\lambda}{2}\|g_t\|_{L^2([0,k_c]^3)}\\ &\leq - \lambda \|g_t^2 \|_{L^2([0,k_c]^3)}  +  \frac{\lambda}{2} \|g_t^2 \|_{L^2([0,k_c]^3)} +  \frac{\lambda}{2} \|g_0^1 \|_{L^2([0,k_c]^3)}\\ 
& = -\frac{\lambda}{2} \|g_t^2 \|_{L^2([0,k_c]^3)}+  \frac{\lambda}{2} \|g_0^1 \|_{L^2([0,k_c]^3)}
 \end{split}
\end{equation}
as $g^1$ is parallel to the subspace of collision invariants. This yields, by Grönwall's lemma, exponential relaxation of $g_t^2:= \Pi^{\perp} g_t$ as soon as we assume that $\Pi g_0=0$ (using also conservation of mass).

Notice that without the assumption of having $\Pi g_0=0$, one gets
\begin{equation}
\begin{split}
\| g_t^2 \|_{L^2([0,k_c]^3)}& \leq e^{ -\frac{\lambda}{2}t} \| g_0^2 \|_{L^2([0,k_c]^3)} + (1-e^{-\frac{\lambda}{2}t}) \| g_0^1 \|_{L^2([0,k_c]^3)} 
\end{split}
\end{equation}
This inequality now means that as long as $\| g_0\|_{L^2([0,k_c]^3)}\leq \frac{1}{2}\sqrt{\frac{\lambda}{2C}}$, i.e. $g_0 \in H$ where existence of solutions is guaranteed, the perturbation $g$ around the equilibrium solution $f_{\infty}(k)$ remains small for any time $t$. Thus $n(t,k) = f_{\infty}(k) (1+g(t,k))$ remains in $L^2([0,k_c]^3)$ close to $f_{\infty}(k)$. 

\smallsection{Acknowledgements} I would like to thank Laure Saint-Raymond for many useful discussions and suggestions. Support by a Huawei fellowship at IHES is acknowledged.

\bibliographystyle{alpha}
\bibliography{bibliography}

\end{document}